\newcommand{\Alt}{\mathop{\mathrm{Alt}}}
\newcommand{\PSL}{\mathop{\mathrm{PSL}}}
\newcommand{\Sym}{\mathop{\mathrm{Sym}}}
\newcommand{\Irr}{\mathop{\mathrm{Irr}}}
\newcommand{\Aut}{\mathop{\mathrm{Aut}}}
\newtheorem{theo}{Theorem}[section]
\newtheorem{propo}[theo]{Proposition}
\newtheorem{lemma}[theo]{Lemma}
\newtheorem{coro}[theo]{Corollary}
\begin{document}
\title[Character degree sums  and Gelfand-Graev-like characters]{A uniform upper bound for the character degree sums  and Gelfand-Graev-like characters for finite simple groups}

\author[P. Spiga]{Pablo Spiga}

\author[A. Zalesski]{Alexandre Zalesski}
\address{Dipartimento di Matematica e Applicazioni,\newline
University of Milano-Bicocca,
 Via Cozzi 53 Milano, MI 20125, Italy}

\thanks{e-mail:
 E-mail: pablo.spiga@unimib.it (Pablo Spiga),
 alexandre.zalesski@gmail.com (Alexandre Zalesski)}

\subjclass[2000]{20B15, 20H30} \keywords{Character degree sum, finite simple groups}

\maketitle

\begin{abstract} Let $G$ be a finite non-abelian simple group and let $p$ be a
prime. We classify all pairs  $(G,p)$ such that the sum of the
complex irreducible character degrees of $G$ is greater than  the index of a Sylow
$p$-subgroup of $G$. Our classification includes all groups of
Lie type in defining characteristic  $p$ (because every Gelfand-Graev character of  $G$ is multiplicity
free and has degree equal to the above index), and a handful of well-described examples.
\end{abstract} 

\medskip
\centerline{Dedicated to Daniela Nikolova on the occasion of her
60-th birthday}

\section{Introduction}
The problem of computing the sum $\Sigma(G)$ of the irreducible
character degrees of a finite group $G$ is  of considerable interest in representation
theory of finite groups. However, no explicit formula for  $\Sigma(G)$  is known for an arbitrary group $G$. The problem seems to be of greater importance for
simple groups. The irreducible character degrees of the sporadic simple groups and of the
exceptional groups of Lie type  have been computed, and hence with some
effort one can compute  $\Sigma(G)$. 
Nevertheless, for groups of large rank explicit formulae for $\Sigma(G)$ are not known, for instance $\Sigma(G)$ is not know for symplectic and orthogonal groups of even characteristic.
To the best of our knowledge, explicit
formulae  for  $\Sigma(G)$  have been obtained  for  $\mathrm{GL}_n(q)$~\cite{K,G1}, $\mathrm{PGL}_n(q)$~\cite{G1}, $\mathrm{Sp}_{2n}(q)$ with $q\equiv 1\pmod 4$~\cite{G2}, $\mathrm{Sp}_{2n}(q)$ with
$q\equiv -1\pmod 4$~\cite{V1}, $\mathrm{GU}_n(q)$~\cite{TV}, and for
orthogonal groups in odd characteristic~\cite{V}.  Observe that these results do not always yield a corresponding formula for the sum of the character degrees of the non-abelian simple composition factor of $G$.

For the symmetric
groups $\mathrm{Sym}(n)$, the sum of the degrees of the non-trivial irreducible
characters  equals the number of involutions, which can be
computed. However the analogous result fails for the alternating
groups.

\def\syl{Sylow $p$-subgroup }

Therefore in this paper we turn our attention to the problem of bounding from above $\Sigma(G)$, when $G$ is simple. Our upper bound is given in terms of the order of $G$ only, and is valid for the overwhelming majority of simple groups.

We let $|G|$ denote the order of a finite group $G$ and, given a prime $p$, we let $|G|_{p'}$ denote the index of a \syl of $G$. If $G$ is a group of Lie type in defining  characteristic $p$,
then $|G|_{p'}$ is a well-known lower bound for  $\Sigma(G)$. In fact, a Gelfand-Graev character $\Gamma$ of $G$ is multiplicity free and has degree   $|G|_{p'}$. Moreover, as the trivial character is not a constituent of $\Gamma$, we have $|G|_{p'}=\Gamma(1)<\Sigma(G)$.

In~\cite{Z}, the second author proposed  a generalization of the definition of
a Gelfand-Graev character. 
Namely, a character $\Gamma$ of an arbitrary group $G$ is said to be a Gelfand-Graev-like character for the prime $p$ if there exists a linear character $\nu$ of a Sylow $p$-subgroup of $G$ such that the induced character $\nu^G$ is multiplicity free and equals $\Gamma$. The investigation of this new notion  led to the following:

\smallskip

\noindent{\bf Conjecture.}  {\sl Let $G$ be a finite non-abelian simple group, let $p$ be a prime and let $P$ be a Sylow $p$-subgroup of $G$. Then
either 
\begin{description}
\item[$(1)$] $\Sigma(G)<|G|_{p'}=|G|/|P|, $ or
\item[$(2)$] $G$ has a Gelfand-Graev-like character for the prime $p$.
\end{description}}

\smallskip

In this paper we prove this conjecture, and in fact we obtain the following
upper bound for the sum of the irreducible character degrees, which will easily imply the conjecture:


\begin{theo}\label{ta1} Let G be a finite non-abelian simple group, let 
 $p$ be a prime and let  $P$ be a \syl of $G$.
 Then either $\Sigma(G)<|G|_{p'}=|G|/|P|$ or one of the following holds:
\begin{description}
\item[$(1)$] G is  isomorphic to a group of Lie type
 in defining characteristic $p;$
\item[$(2)$] $G\cong \mathrm{PSL}_2(q)$, $q>2$ even and $|P|=q+1;$
\item[$(3)$] $G\cong \mathrm{PSL}_2(q)$, $q>3$ odd, $p=2$, and $|P|=q- 1$ or $|P|=q+1$.
\end{description}
 \end{theo}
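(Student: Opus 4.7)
\medskip

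\noindent\textbf{Proof proposal.} The plan is to invoke the classification of finite simple groups and treat each family separately. Case~(1) is immediate: if $G$ is of Lie type in defining characteristic $p$, then the Gelfand--Graev character shows $|G|_{p'}<\Sigma(G)$, as noted in the paper. So the remaining task is to show that in every other situation, except the $\PSL_2$ exceptions of~(2)--(3), one has $\Sigma(G)<|G|_{p'}$. The main numerical tool I would use is the Cauchy--Schwarz inequality
\begin{equation*}
\Sigma(G)^2=\Bigl(\sum_{\chi\in\Irr(G)}\chi(1)\Bigr)^2\le k(G)\sum_{\chi\in\Irr(G)}\chi(1)^2=k(G)\,|G|,
\end{equation*}
where $k(G)$ is the number of conjugacy classes of $G$. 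Thus it suffices to prove the cleaner bound $k(G)\,|P|^2<|G|$, i.e.\ $|P|^2<|G|/k(G)$, in each non-exceptional case.

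For the sporadic simple groups I would verify this directly from the character table data in the Atlas, computing $\Sigma(G)$ (or using the Cauchy--Schwarz bound when it suffices) and comparing with $|G|/|P|$ for each prime $p$ dividing $|G|$. For the alternating groups $\Alt(n)$, $k(\Alt(n))$ is controlled by the number of partitions of $n$, and for primes $p$ the order $|P|$ is given by Legendre's formula; the Cauchy--Schwarz bound already wins by a wide margin once $n$ is moderately large, leaving a small range of $n$ that can be checked by hand or by computer.

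For groups of Lie type in a non-defining characteristic $p$, the key point is that $k(G)$ grows only polynomially in $q$ (with degree equal to the rank), while $|G|$ grows like a polynomial of much higher degree, and $|P|$ is controlled by the $p$-parts of the various cyclotomic factors $\Phi_d(q)$ in $|G|$. I would split according to the family (linear, unitary, symplectic, orthogonal, exceptional) and, for each, bound $|P|$ by the maximal $p$-part of $|G|/|G|_r$ (where $r$ is the defining characteristic), then feed this into $k(G)\,|P|^2<|G|$. The inequality is comfortably true in high rank and is tight precisely for small rank groups in which $|P|$ approaches the size of a Coxeter torus.

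The main obstacle is precisely this tight regime, which manifests in rank one: for $G=\PSL_2(q)$ with $p\mid q\pm 1$, one has $|P|\in\{q-1,q+1,(q\pm1)/2\}$ and $k(G)$ is of order $q$, so $k(G)\,|P|^2$ is comparable to $|G|$ and the crude Cauchy--Schwarz bound fails. Here I would use the explicit character table of $\PSL_2(q)$ to compute $\Sigma(G)$ exactly, obtain sharp inequalities in terms of $q$, and thereby identify the genuine exceptions listed in~(2) and~(3). A separate small-case check (low rank groups such as $\PSL_3(q)$, $\PSU_3(q)$, $\mathrm{Sp}_4(q)$, $\mathrm{Sz}(q)$, ${}^2G_2(q)$, ${}^2F_4(q)$) with small primes $p$ will close the argument; there the known parametrisation of irreducible characters allows $\Sigma(G)$ to be estimated term by term and shown to be $<|G|_{p'}$ for every $p$ outside defining characteristic.
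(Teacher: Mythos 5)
Your proposal is correct in outline but takes a genuinely different route from the paper. Your central device is the Cauchy--Schwarz estimate $\Sigma(G)\le\sqrt{k(G)\,|G|}$, which reduces the problem to the conjugacy-class count inequality $k(G)\,|P|^2<|G|$; to make this work you would need to import known bounds of the form $k(G)\le c\,q^{r}$ for groups of Lie type (e.g.\ Fulman--Guralnick) and $k(\Alt(n))\le p(n)$, and you would still be left with the explicit $\PSL_2(q)$ computation and a computer sweep of small-rank, small-$q$ cases where $|P|$ is comparable to a Coxeter torus. The paper's strategy is more refined and case-specific: for $\mathrm{GL}$, $\mathrm{GU}$ and odd orthogonal groups it uses \emph{exact} closed-form expressions for $\Sigma(G)$ due to Gow/Klyachko/Macdonald, Thiem--Vinroot and Vinroot, then passes to the simple factor; for $\Sym(n)$ it uses the Frobenius--Schur count of involutions and a recursion of Chowla--Herstein--Moore; for exceptional types it reads off $\Sigma(G)$ from CHEVIE; and the crucial new ingredient, used for symplectic and even orthogonal groups where no formula is available, is the Lusztig-series bound $\Sigma(G)<|W|\cdot|G|_{\ell'}$ (Proposition~\ref{np1}), obtained by bounding the size of each series ${\mathcal E}_s$ by $|W|$ and each degree in ${\mathcal E}_s$ by the degree of the regular character $\rho_s$. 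The trade-off is exactly what you would expect: your Cauchy--Schwarz route is uniform and elementary but quadratically lossy, so it forces a considerably larger finite list of exceptional $(G,p)$ to be settled by direct computation, whereas the paper's bounds are tight enough to leave only a handful of sporadic checks; moreover the paper's method yields the sharper uniform statements $\Sigma(G)<2|G|_{\ell'}$ (exceptional types) and $\Sigma(G)<|W|\,|G|_{\ell'}$ (classical types), which the Cauchy--Schwarz argument does not give and which are of independent interest.
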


Note that in the exceptional cases (1), (2), (3) of Theorem~\ref{ta1} we have
$\Sigma(G)>|G|_{p'}$. This was observed in~\cite{Z}, where the
following  was proven:

\begin{theo}\label{ta2} Let $G,p$ and $P$ be as in Theorem~$\ref{ta1}$.
Then the following statements are equivalent:
\begin{description}
\item[$(A)$] the pair $(G,p)$ is as in $(1),(2)$ or $(3)$ of Theorem~$\ref{ta1}$;
\item[$(B)$]  $G$ has a Gelfand-Graev-like character for the prime $p$.
\end{description}
\end{theo}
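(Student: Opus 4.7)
The implication $(B)\Rightarrow(A)$ will follow at once from Theorem~\ref{ta1}. Indeed, if $\Gamma=\nu^G$ is multiplicity-free for some linear character $\nu$ of a Sylow $p$-subgroup $P$, then $\Gamma$ decomposes as a sum of distinct irreducible characters of $G$, so
\[
|G|_{p'}=[G:P]=\Gamma(1)=\sum_\chi \chi(1)\le \Sigma(G),
\]
where the sum runs over the irreducible constituents of $\Gamma$. Since $\Sigma(G)\ge |G|_{p'}$, Theorem~\ref{ta1} forces $(G,p)$ into one of the cases~(1)--(3), as required.

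For the converse $(A)\Rightarrow(B)$ I would treat the three cases separately. In case~(1), $G$ is a finite simple group of Lie type in defining characteristic $p$; then a Sylow $p$-subgroup $P$ coincides, up to conjugacy, with the unipotent radical $U$ of a Borel subgroup of $G$, and the classical Gelfand--Graev character $\Gamma=\psi^G$, induced from a non-degenerate linear character $\psi$ of $U$, is multiplicity-free by the theorem of Gelfand--Graev in the general Lie-type formulation (due to Steinberg and Yokonuma). This $\Gamma$ is a Gelfand-Graev-like character of $G$ in the sense of the present paper.

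Cases~(2) and~(3) both concern $G\cong\mathrm{PSL}_2(q)$, and I would dispose of them by direct computation from the well-known character table of $\mathrm{PSL}_2(q)$. In each case the Sylow $p$-subgroup $P$ is cyclic and contained in a maximal torus $T$ of $G$ with dihedral normaliser $N_G(T)$. For a non-trivial linear character $\nu$ of $P$, Frobenius reciprocity gives $\langle \nu^G,\chi\rangle_G = \langle\nu,\chi|_P\rangle_P$ for each $\chi\in\Irr(G)$, and this inner product can be read off from the values of $\chi$ on the elements of $T$. The irreducible characters of $\mathrm{PSL}_2(q)$ split into the trivial character, the Steinberg character, the principal series and the discrete series, plus two additional characters of degree $(q\pm 1)/2$ when $q$ is odd; a routine case-by-case calculation then confirms that for a suitable choice of $\nu$ each of these inner products lies in $\{0,1\}$. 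The main obstacle is the choice of $\nu$: in case~(2) essentially any faithful linear character of $P$ suffices, while in case~(3), where $p=2$ and one must contend with the two exceptional characters of degree $(q\pm 1)/2$, a finer analysis is needed to guarantee multiplicity-freeness.
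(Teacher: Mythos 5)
The paper itself contains no proof of Theorem~\ref{ta2}; it is quoted from the cited paper~\cite{Z}, so there is no in-text argument to compare against, and what follows is a review of your argument on its own terms.

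Your implication $(B)\Rightarrow(A)$ is correct and essentially forced: a multiplicity-free $\nu^G$ with $\nu$ linear on $P$ has degree $[G:P]=|G|_{p'}$, and being a sum of distinct irreducibles yields $|G|_{p'}\le\Sigma(G)$, so Theorem~\ref{ta1} leaves only cases~(1)--(3). Your treatment of case~(1) of the converse, via the classical Gelfand--Graev character in defining characteristic, is standard (one should say a word about descending from $\mathbf{G}^{F}$ to the simple quotient $S$, but since the Sylow $p$-subgroup meets the centre trivially this is routine).

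There is, however, a genuine gap in case~(3) of the converse. You assert that in cases~(2) and~(3) the Sylow $p$-subgroup $P$ is cyclic and contained in a maximal torus $T$. This is true in case~(2) ($q$ even, $|P|=q+1$, $P$ equals a nonsplit torus), but false in case~(3). For $q>3$ odd and $p=2$, the Sylow $2$-subgroup of $\mathrm{PSL}_2(q)$ is the Klein four-group when $q=5$ and is dihedral of order $q\mp 1\ge 8$ otherwise; in the dihedral case $P=N_G(T)$ strictly contains a torus $T$ of index $2$, and the involutions of $P$ outside $T$ invert $T$. Consequently the Frobenius-reciprocity computation you propose --- reading $\langle\nu,\chi|_P\rangle_P$ off the values of $\chi$ on $T$ --- cannot be carried out as described: one must also use the values of $\chi$ at the reflections, and the linear characters $\nu$ of $P$ are exactly those of $P/[P,P]\cong C_2\times C_2$ (only four of them, none faithful once $|P|\ge 8$), not the ``faithful linear character of a cyclic $P$'' you envisage. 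Your closing remark that ``a finer analysis is needed'' understates the issue: the framework (cyclic $P$ inside $T$) is simply wrong in case~(3), and the argument needs to be rebuilt, e.g.\ by listing the four linear characters of the dihedral group $P$ and computing each $\nu^G$ directly from the character table of $\mathrm{PSL}_2(q)$, including the two characters of degree $(q+\epsilon)/2$.
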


The above conjecture follows from  Theorems~\ref{ta1} and~\ref{ta2}. 
The upper
bound $|G|_{p'}$ for $\Sigma(G)$ is uniform in the sense that it is valid for
almost every pair  $(G,p)$. Of course, each simple group of Lie
type in characteristic $p$ stands aside because classical
Gelfand-Graev characters are multiplicity free and are of the shape
$\nu^G$,  for suitable $\nu$. Thus, the inequality $|G|_{p'}>
\Sigma(G)$ characterizes the simple groups of Lie type in defining
characteristic $p$, except for the cases (2) and~(3) of Theorem~\ref{ta1}.   

Using well-known isomorphisms between simple
groups of different series, we can restate Theorem~\ref{ta1} as
follows:
\begin{theo}\label{main1}
Let $G$ be a finite non-abelian simple group and let $p$ be a prime. If $G$ is an alternating or a sporadic simple group, then either $\Sigma(G)<|G|_{p'}$ or
\begin{description}
\item[$(1)$]$G=\Alt(5)\cong \mathrm{PSL}_2(4)\cong\mathrm{PSL}_2(5)$ and $p\in \{2,5\}$, or
\item[$(2)$]$G=\Alt(6)\cong \mathrm{PSL}_2(9)\cong(\mathrm{Sp}_4(2))'$ and $p\in \{2,3\}$, or
\item[$(3)$]$G=\Alt(8)\cong\mathrm{PSL}_4(2)$ and $p=2$.
\end{description}
If $G$ is a group of Lie type in characteristic $\ell$ with $\ell\neq p$, then either $\Sigma(G)<|G|_{p'}$ or
\begin{description}
\item[$(4)$]$G=(G_2(2))'\cong \mathrm{PSU}_3(3)$ and $p=3$, or
\item[$(5)$]$G=({^2}G_2(3))'\cong \mathrm{PSL}_2(8)$ and $p=2$, or
\item[$(6)$]$G=\mathrm{PSU}_3(3)\cong (G_2(2))'$ and $p=2$, or
\item[$(7)$]$G=\mathrm{PSU}_4(2)\cong \mathrm{PSp}_4(3)$ and $p=3$, or
\item[$(8)$]$G=(\mathrm{PSp}_4(2))'\cong\mathrm{PSL}_2(9)$ and $p=3$, or
\item[$(9)$]$G=\mathrm{PSp}_4(3)\cong \mathrm{PSU}_4(2)$ and $p=2$, or
\item[$(10)$]$G=\mathrm{PSL}_3(2)\cong \mathrm{PSL}_2(7)$ and $p=7$, or
\item[$(11)$]$G=\mathrm{PSL}_2(q)$, $\ell=2$, $q+1=p^t$ for some $t\geq 1$, or
\item[$(12)$]$G=\mathrm{PSL}_2(q)$, $\ell>2$, $p=2$ and $q-1=2^t$ for some $t\geq 2$, or
\item[$(13)$]$G=\mathrm{PSL}_2(q)$, $\ell>2$, $p=2$ and $q+1=2^t$ for some $t\geq 3$.
\end{description}
\end{theo}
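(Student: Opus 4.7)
The plan is to deduce Theorem~\ref{main1} directly from Theorem~\ref{ta1} by a case-by-case analysis based on the classification of finite non-abelian simple groups. Since Theorem~\ref{ta1} already resolves the problem within the ``Lie type'' framework, the sole task is to translate its three exceptional possibilities into the language of the classification (alternating, sporadic, Lie type), and this amounts essentially to a careful use of the list of exceptional isomorphisms between simple groups.

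First I would treat the alternating and sporadic groups. An alternating group $\Alt(n)$ falls into one of the exceptional cases of Theorem~\ref{ta1} only if it admits a description as a group of Lie type, which by the standard list of exceptional isomorphisms restricts $n$ to $\{5,6,8\}$: $\Alt(5)\cong\PSL_2(4)\cong\PSL_2(5)$, $\Alt(6)\cong\PSL_2(9)\cong(\mathrm{Sp}_4(2))'$, and $\Alt(8)\cong\PSL_4(2)$. For each such $n$ and each Lie type description, case~(1) of Theorem~\ref{ta1} yields as $p$ the relevant defining characteristic, while cases~(2)--(3) demand in addition $|P|=q\pm 1$; a quick check of $|\PSL_2(q)|$ for $q\in\{4,5,9\}$ determines the admissible primes $p$ and delivers cases~(1)--(3) of Theorem~\ref{main1}. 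For the sporadic groups, by definition none is of Lie type nor isomorphic to any $\PSL_2(q)$, so none of the exceptional cases of Theorem~\ref{ta1} applies and the uniform bound $\Sigma(G)<|G|_{p'}$ always holds.

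Next I would turn to the groups of Lie type in characteristic $\ell\ne p$. Case~(1) of Theorem~\ref{ta1} then requires $G$ to be simultaneously of Lie type in two distinct characteristics, a situation controlled by the finite list of cross-characteristic exceptional isomorphisms: $\PSL_2(7)\cong\PSL_3(2)$, $\PSL_2(9)\cong(\mathrm{Sp}_4(2))'$, $\mathrm{PSU}_4(2)\cong\mathrm{PSp}_4(3)$, $(G_2(2))'\cong\mathrm{PSU}_3(3)$, and $({}^2G_2(3))'\cong\PSL_2(8)$. Reading off the ``other'' defining characteristic for each of these isomorphisms in turn produces the sporadic-looking cases~(4)--(10) of Theorem~\ref{main1}. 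For the remaining cases~(2)--(3) of Theorem~\ref{ta1}, $G$ is of the form $\PSL_2(q)$, and the condition that $|P|$ equals $q+1$ (respectively $q\pm 1$), together with the fact that $|P|$ is a prime power, becomes after a short computation of $|\PSL_2(q)|$ the arithmetic requirement $q+1=p^t$ when $\ell=2$, or $q\pm 1=2^t$ when $\ell$ is odd and $p=2$. Reading off the minimal admissible $t$ from the constraints $q>2$ (resp.\ $q>3$) and $q$ a prime power yields cases~(11)--(13).

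The main obstacle is organizational rather than mathematical: one must (a) enumerate all relevant exceptional isomorphisms between simple Lie type groups in different characteristics (the list is classical but easy to mis-state by omission), and (b) keep the parametrization coherent when the same isomorphism class admits two distinct Lie type descriptions (for instance, the pair $(\PSL_3(2),7)$ is recorded in case~(10) rather than being absorbed into~(11) via $\PSL_3(2)\cong\PSL_2(7)$). Once these points are handled with care, Theorem~\ref{main1} follows as a pure reformulation of Theorem~\ref{ta1}.
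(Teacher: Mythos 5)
Your proposal is circular within the logic of the paper. You open by declaring that you will ``deduce Theorem~\ref{main1} directly from Theorem~\ref{ta1},'' but the paper never proves Theorem~\ref{ta1} independently of Theorem~\ref{main1}: immediately before stating Theorem~\ref{main1}, the authors announce that ``using well-known isomorphisms between simple groups of different series, we can restate Theorem~\ref{ta1} as follows,'' so the two theorems are presented as equivalent reformulations of each other, with the detailed case-by-case proof at the end of the paper establishing the version stated as Theorem~\ref{main1}, after which Theorem~\ref{ta1} follows by re-grouping. Consequently, an argument that starts by assuming Theorem~\ref{ta1} is not a proof of Theorem~\ref{main1} in this context; it is only (one direction of) the equivalence translation, which is easy and essentially already asserted by the paper.

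What is actually missing from your proposal is the entire mathematical content. The real work of the paper is organized by type of simple group: Lemma~\ref{is2} disposes of the sporadic groups, Proposition~\ref{pp1} of the alternating groups, Proposition~\ref{execbound} of the exceptional groups of Lie type, Proposition~\ref{propoPOmega} of the odd-dimensional orthogonal groups, Proposition~\ref{ortheven} of the even-dimensional orthogonal groups, Proposition~\ref{symp} of the symplectic groups, and Corollaries~\ref{coroPSL} and~\ref{coroPSU} of the linear and unitary groups. These in turn rest on explicit character-degree-sum formulas (for $\mathrm{GL}$, $\mathrm{GU}$, $\mathrm{GO}_{2m+1}$), the uniform bound $\Sigma(G)<|W|\cdot|G|_{\ell'}$ from Proposition~\ref{np1} and the count $|\mathcal E_s|\le|W|$ from Theorem~\ref{e(s)}, Sylow bounds from Lemmas~\ref{Sylowp},~\ref{ss1},~\ref{tt2}, and a fair amount of \texttt{magma}/\texttt{CHEVIE} verification for small cases. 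The proof of Theorem~\ref{main1} given in the paper invokes exactly these results and then works out the delicate case $S=\mathrm{PSL}_2(q)$ explicitly (distinguishing $q\equiv 1$ and $q\equiv 3\pmod 4$ to separate cases~(12) and~(13) and to rule out $q+1=2p^t$ with $p>2$). None of that appears in your proposal. The observation you do make correctly---that the translation between the two formulations hinges on the finite list of exceptional isomorphisms, and that some pairs like $(\mathrm{PSL}_3(2),7)$ vs.\ $(\mathrm{PSL}_2(7),\cdot)$ get absorbed into different clauses---is a genuine point about how the statement is organized, but it is bookkeeping on top of a theorem you have not proved.
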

When $G$ is a simple group of Lie type in characteristic $p$, we have $\Sigma(G)>|G|_{p'}$, and hence one wishes to bound $\Sigma(G)-|G|_{p'}$ or $\Sigma(G)/|G|_{p'}$. In fact, the main ingredients of our proof of Theorem~\ref{ta1} are Propositions~\ref{exp1} and~\ref{np1}. Namely, when $G$ is an exceptional group of Lie type Proposition~\ref{exp1} shows that $\Sigma(G)\leq 2\cdot|G|_{p'}$. Similarly, when $G$ is a classical group,
 Proposition~\ref{np1} shows that $\Sigma(G)\leq |W|\cdot|G|_{p'}$,  where $W$ is the Weyl group of the corresponding algebraic group $\mathbf{G}$. To prove  Proposition~\ref{np1} we show that the number of elements in any Lusztig series of irreducible characters of $G$ does not exceeds $|W|$, see Theorem~\ref{e(s)}.

In the literature there are already some upper bounds for
$\Sigma(G)$ in terms of the field parameter $q$ of $G$. In fact, for
classical groups of odd characteristic, it is shown that $\Sigma(G)\leq
(q+1)^{(r+d)/2}$, where $r$ and $d$ are the rank 
and the dimension of $\mathbf{G}$, see  Kowalski~\cite{Ko} and Vinroot \cite[Theorem 6.1]{V}. The same  bound holds  also for general linear and unitary groups in any
characteristic. For non-twisted groups $G$, Kowalski~\cite{Ko} has also shown the weaker estimate $\Sigma(G)\leq (q+1)^{(r+d)/2}\left(1+2r|W|/(q-1)\right)$.

\smallskip

\noindent{\bf Notation and conventions. }
Given a positive integer $n$, we
denote by $\Alt(n)$ and by $\Sym(n)$ the alternating group and the
symmetric group of degree $n$, respectively. For the other simple groups we simply follow the notation in~\cite{Atl}.
 
Given a
prime $p$, we denote by $n_p$ the largest power of $p$ dividing
$n$ and by $n_{p'}$ the $p'$-part of $n$, that is, $n_{p'}=n/n_p$.

For a finite group  $G$, we let $|G|$ denote the order of $G$, also  we let $G'$ denote the derived subgroup of $G$.
As usual, $\Irr (G)$ denotes the set of the complex  irreducible characters of  $G$. We write $\Sigma(G)$ for $\sum_{\chi\in \Irr(G)}\chi(1)$.
If $H$ is a subgroup of $G$ and $\nu$ is a character of $H$, then
$\nu^G$ means the induced character from $H$ to $G$.

Let ${\mathbf H}$ be a reductive algebraic group. An algebraic
group endomorphism $F:{\mathbf H}\rightarrow{\mathbf H} $ is
called Frobenius if the subgroup  $H={\mathbf H}^{F}$ of the
elements fixed by  $F$ is finite. The characteristic $\ell$, say,
of the ground field of ${\mathbf H}$ is called the defining
characteristic of ${\mathbf H}$, as well as of $H$. If ${\mathbf
H}$ is simple, we call $H$ a group of Lie type, and (when $H$ is not soluble) the unique non-abelian
simple composition factor $S$ of  $H$ is called a simple group of Lie
type. We refer to $\ell$ as the defining characteristic of $S$.

An abstract non-abelian simple group $G$ is said to be a finite simple group of Lie type if $G
$ is isomorphic to the simple composition factor of some group $H$ obtained
from a simple algebraic group ${\mathbf H}$. In particular, some group $G$
may have more than one defining characteristic. For instance, the alternating group $\Alt(6)$ is
isomorphic to $(\mathrm{Sp}_4(2))'$ and to $\mathrm{PSL}_2(9)$, and hence it is a
group of Lie type in defining characteristic $2$ and $3$. More examples can be deduced from Theorem~\ref{main1}.

\section{Preliminaries}\label{blabbering}
 Some results of general nature on the character degree sum can be obtained from the theory
of Frobenius-Schur indicator. We outline here this method.

Let $G$ be a finite group and let $M$ be an  irreducible  $\mathbb{C}G$-module affording  a character $\phi$. According to~\cite[\S 73A]{CR2}, if $M$ does not admit a non-zero $G$-invariant bilinear form, then  $M$ and $\phi$ are called unitary. This case occurs exactly when $\phi$ is not real-valued, see~\cite[page~$58$]{Is}. Similarly, if $M$ admits a non-zero $G$-invariant bilinear form $B$, then $M$ and $\phi$ are said to be orthogonal
 (respectively, symplectic) if $B$ is symmetric (respectively, skew-symmetric).

We let $\Irr^+ (G)$ and $\Irr^-( G)$ denote the set of orthogonal and  symplectic complex irreducible characters of $G$. The following result can be found
in~\cite[(4.6)]{Is}.

\begin{lemma}\label{is1} Let $t$ be the number of solutions of the equation $x^2=1$ in $G$.
Then $t=\sum_{\chi\in\Irr ^+(G)}\chi(1)-\sum_{\chi\in\Irr ^-(G)}\chi(1)$.
In particular, if $\Irr^- (G)=\emptyset$, then $t=\sum_{\chi\in\Irr^+ (G)}\chi(1)$.
Moreover, if every irreducible representation of $G$ is orthogonal, then $t=\sum_{\chi\in\Irr (G)}\chi(1)=\Sigma(G)$.
\end{lemma}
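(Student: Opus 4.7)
The plan is to use the Frobenius--Schur indicator machinery, which is the standard tool for translating a statement about real, complex, and quaternionic types of representations into a character--theoretic identity. For each $\chi\in\Irr(G)$ I would define
\[
\nu_2(\chi)=\frac{1}{|G|}\sum_{g\in G}\chi(g^2),
\]
and recall (this is the content of the Frobenius--Schur theorem, e.g.\ Isaacs Chapter~4) that $\nu_2(\chi)=+1,-1,0$ according as $\chi$ is orthogonal, symplectic, or unitary in the sense defined just above the lemma. Thus $\nu_2$ precisely identifies which of $\Irr^+(G)$, $\Irr^-(G)$, or neither the character belongs to.

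Next, I would compute the weighted sum $S:=\sum_{\chi\in\Irr(G)}\nu_2(\chi)\chi(1)$ in two different ways. On the one hand, by the trichotomy above,
\[
S=\sum_{\chi\in\Irr^+(G)}\chi(1)-\sum_{\chi\in\Irr^-(G)}\chi(1).
\]
On the other hand, substituting the definition of $\nu_2$ and swapping the order of summation,
\[
S=\frac{1}{|G|}\sum_{g\in G}\sum_{\chi\in\Irr(G)}\chi(1)\chi(g^2)=\frac{1}{|G|}\sum_{g\in G}\rho_G(g^2),
\]
where $\rho_G$ is the regular character of $G$. Since $\rho_G(h)=|G|$ if $h=1$ and $0$ otherwise, the inner expression equals $|G|$ precisely when $g^2=1$ and vanishes otherwise, so $S$ equals the number $t$ of solutions of $x^2=1$ in $G$. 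Comparing the two expressions gives the main identity.

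The last two sentences of the lemma are immediate consequences: if $\Irr^-(G)=\emptyset$ the negative sum disappears, and if in addition $\Irr(G)=\Irr^+(G)$ (every irreducible representation is orthogonal) then the positive sum becomes $\Sigma(G)$. The only ``obstacle'' is the appeal to the Frobenius--Schur theorem characterising $\nu_2(\chi)$; this is classical and one may simply quote Isaacs \textsection 4. The remainder is a one-line manipulation of the regular character, so no further work is required.
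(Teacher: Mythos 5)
Your proof is correct and is precisely the standard argument underlying Isaacs (4.6), which is what the paper cites without reproducing; you have simply written out the Frobenius--Schur indicator computation that the paper relies on implicitly.
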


It is well-known that an irreducible representation
is orthogonal if and only if it can be realized over the real number field, see~\cite[(4.15)]{Is}. Furthermore,
$\Irr( G)=\Irr^+ (G)\cup \Irr^-( G)$ if and only if all characters of $G$ are real valued, see~\cite[page~$58$]{Is};
in turn, this happens if and only if every element of $G$ is conjugate to its own inverse~\cite[(6.13)]{Is}.

We include in this preliminary section  a result on the sporadic simple groups.

\begin{lemma}\label{is2} Let $S$ be a sporadic simple group and let $G=\Aut(S)$. Then, for every prime $p$, we have that either $\Sigma(G)<|S|_{p'}$, or $S=M_{12}$ and $p=2$. Furthermore, $\Sigma(M_{12})<|M_{12}|_{2'}$.
\end{lemma}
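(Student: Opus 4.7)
The proof is a finite case analysis using the character tables and order data collected in the Atlas of Finite Groups. For each of the $26$ sporadic simple groups $S$ we have $|\mathop{\mathrm{Out}}(S)|\in\{1,2\}$, so $G=\Aut(S)$ is either $S$ itself or a degree-$2$ extension, and its irreducible character degrees can be read off directly.

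My first step is to dispose of the primes $p$ not dividing $|S|$: here $|S|_{p'}=|S|$, and the inequality $\Sigma(\Aut(S))<|S|$ is cheap, e.g.\ from the Cauchy--Schwarz estimate
\[
\Sigma(\Aut(S))^{2}\;\leq\;|\Irr(\Aut(S))|\cdot|\Aut(S)|,
\]
combined with the fact that for every sporadic $S$ the class number $|\Irr(\Aut(S))|$ is tiny compared to $|S|$. This leaves only the finitely many pairs $(S,p)$ with $p$ dividing $|S|$, at most six primes per group.

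For those remaining pairs the plan is the obvious one: extract the degrees of $\Irr(\Aut(S))$ from the Atlas, sum them to get $\Sigma(\Aut(S))$, and compare the result with $|S|_{p'}=|S|/|P|$. One might hope to shortcut this via Lemma~\ref{is1}, bounding $\Sigma(\Aut(S))$ by the involution count; but several sporadic groups (e.g.\ $M_{11}$, $M_{23}$, $M_{24}$, $J_3$, $Co_3$) carry non-real characters, so the Frobenius--Schur route does not supply a uniform bound and a direct tabulation is cleaner. Intuitively, the inequality holds in all remaining cases because the Sylow $p$-subgroups of the sporadic groups are quite small compared to what $\Sigma(\Aut(S))$ requires; the only pair where the $p$-part of $|S|$ is large enough to break this is $(S,p)=(M_{12},2)$, since $|M_{12}|=2^{6}\cdot 3^{3}\cdot 5\cdot 11$ forces $|M_{12}|_{2'}=1485$.

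For the final assertion about $M_{12}$ itself, I would simply list the $15$ irreducible character degrees $1,11,11,16,16,45,54,55,55,55,66,99,120,144,176$ from the Atlas and add them up to obtain $\Sigma(M_{12})=924<1485=|M_{12}|_{2'}$. The main obstacle is not conceptual but bookkeeping: one must record $|S|_{p'}$ and $\Sigma(\Aut(S))$ for each of the $26$ sporadic simple groups at each relevant prime, a mechanical check that isolates $(M_{12},2)$ as the unique anomaly.
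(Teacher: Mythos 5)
Your proposal is correct and takes essentially the same route as the paper, whose proof is simply ``This follows from an immediate inspection of the character table of $G$ in~\cite{Atl}.'' Your Cauchy--Schwarz shortcut for primes $p\nmid|S|$, your remark that Frobenius--Schur indicators do not give a uniform bound because several sporadic groups have non-real characters, and your explicit verification $\Sigma(M_{12})=924<1485=|M_{12}|_{2'}$ are all sound, but they are elaborations of the same finite Atlas check rather than a different method.
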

\begin{proof}
This follows from an immediate  inspection of the character table of $G$ in~\cite{Atl}.
\end{proof}

Let $H$ be $\mathbf{H}^{F}$, where $\mathbf{H}$ is a   connected reductive algebraic group in defining characteristic $\ell$ and $F:{\mathbf H}\rightarrow {\mathbf H}$ is a Frobenius endomorphism.
If ${\mathbf T}$ is an $F$-stable maximal torus of ${\mathbf H}$,
then $T=H\cap {\mathbf T}$ is called a maximal torus of $H$.
Recall that $W=N_{{\mathbf H}}({\mathbf T})/{\mathbf T}$ is a
finite group called the Weyl group of ${\mathbf H}$.

\begin{lemma}\label{ss1}
 Let $p$ be a prime with $p\neq \ell$ and let $P$ be a \syl of $H$. Then there exists
an $F$-stable maximal torus ${\mathbf T}$ of ${\mathbf H}$ with $P\leq N_{{\mathbf H}}({\mathbf T})\cap H$. Furthermore,
$N_{{\mathbf H}}({\mathbf T})\cap H=(N_{{\mathbf H}}({\mathbf
T}))^{F}$ and $(N_{{\mathbf H}}({\mathbf T})/{\mathbf
T})^{F}=W^{F}$. In particular, $|P|\leq |T|_p\cdot |W|_p\leq
|T|\cdot |W|$ for a suitable maximal torus $T$ of $H$.
\end{lemma}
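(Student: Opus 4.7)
The plan is to establish the three claims in order, the main content being the first: the existence of an $F$-stable maximal torus $\mathbf{T}$ of $\mathbf{H}$ such that $P \leq N_{\mathbf{H}}(\mathbf{T}) \cap H$. Since $p \neq \ell$, every element of $P$ is semisimple, and I would proceed by induction on $\dim \mathbf{H}$. First I would pick a nontrivial $F$-stable abelian subgroup $A$ of $P$, for instance $A = Z(P)$ (automatically $F$-stable since $P \leq H = \mathbf{H}^F$). As $A$ consists of pairwise commuting semisimple elements, it is contained in some maximal torus of $\mathbf{H}$, and $\mathbf{C} := C_{\mathbf{H}}(A)^{\circ}$ is a connected reductive $F$-stable subgroup of $\mathbf{H}$ whose maximal tori are also maximal tori of $\mathbf{H}$. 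Since $P$ centralises $A$, modulo the finite component group one reduces to $P \leq \mathbf{C}^F$. If $A$ is not contained in $Z(\mathbf{H})$, then $\mathbf{C}$ is a proper subgroup of $\mathbf{H}$ and the inductive hypothesis supplies an $F$-stable maximal torus $\mathbf{T}$ of $\mathbf{C}$ normalised by $P$; the base case $Z(P) \leq Z(\mathbf{H})$ is handled by passing to the quotient $\mathbf{H}/Z(\mathbf{H})^{\circ}$. In any case this existence is classical in the theory of finite reductive groups.

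For the remaining two statements, since $\mathbf{T}$ is $F$-stable so is $N_{\mathbf{H}}(\mathbf{T})$, whence
\[
N_{\mathbf{H}}(\mathbf{T}) \cap H = N_{\mathbf{H}}(\mathbf{T}) \cap \mathbf{H}^F = (N_{\mathbf{H}}(\mathbf{T}))^F.
\]
Applying the Lang--Steinberg theorem ($H^1(F,\mathbf{T}) = 1$ because $\mathbf{T}$ is connected) to the short exact sequence
\[
1 \to \mathbf{T} \to N_{\mathbf{H}}(\mathbf{T}) \to W \to 1
\]
yields the exact sequence $1 \to \mathbf{T}^F \to (N_{\mathbf{H}}(\mathbf{T}))^F \to W^F \to 1$, identifying $(N_{\mathbf{H}}(\mathbf{T})/\mathbf{T})^F$ with $W^F$ and giving $|(N_{\mathbf{H}}(\mathbf{T}))^F| = |\mathbf{T}^F| \cdot |W^F|$. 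Writing $T = \mathbf{T}^F$ and using that $P$ is a $p$-subgroup of $(N_{\mathbf{H}}(\mathbf{T}))^F$, its order divides the $p$-part of $|T|\cdot|W^F|$, so
\[
|P| \leq |T|_p \cdot |W^F|_p \leq |T|_p \cdot |W|_p \leq |T| \cdot |W|,
\]
as required.

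The hardest part will be the first step, the existence of an $F$-stable maximal torus normalised by $P$. The delicate points are controlling the passage through the component group of $C_{\mathbf{H}}(A)$ and handling the base case $Z(P) \leq Z(\mathbf{H})$, but both are standard reductions and may be cited rather than reproved. Once $\mathbf{T}$ is in hand, the rest is a formal application of Lang--Steinberg and elementary divisibility.
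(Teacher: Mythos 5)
Your treatment of the second and third assertions (Lang--Steinberg applied to the connected torus, and the divisibility argument giving $|P|\leq |T|_p\cdot|W|_p$) is correct and is in effect what the paper outsources to~\cite[3.13]{DM}. The issue is your inductive sketch for the first assertion, the existence of an $F$-stable maximal torus $\mathbf{T}$ with $P\leq N_{\mathbf{H}}(\mathbf{T})^F$. The step ``since $P$ centralises $A$, modulo the finite component group one reduces to $P\leq\mathbf{C}^F$'' is not a valid reduction: $P$ lies in $C_{\mathbf{H}}(A)^F$, but the component group $C_{\mathbf{H}}(A)/\mathbf{C}$ can have order divisible by $p$ and there is no reason for $P$ to land inside $\mathbf{C}^F$ (already for $\mathbf{H}=\mathrm{PGL}_2$, $\ell$ odd, $p=2$, $A$ generated by an involution, the full Sylow $2$-subgroup meets both components of the centraliser). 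Since the inductive hypothesis is phrased for subgroups of $\mathbf{C}^F$, the induction does not go through as stated. The cleaner and standard route, which is what sits behind the citation to \cite[II-E.5.19]{SS}, is a fixed-point count: Steinberg's formula says the number of $F$-stable maximal tori of $\mathbf{H}$ is $|\mathbf{H}^F|_\ell^2$, which is coprime to $p$ since $p\neq\ell$; $P$ permutes this finite set by conjugation, so it fixes a point, i.e.\ normalises some $F$-stable maximal torus. You do acknowledge the existence statement ``may be cited rather than reproved,'' which is exactly what the paper does, so the overall conclusion stands; but the inductive sketch you offer as motivation is not a repair-ready argument, and the counting argument is both shorter and gap-free.
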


\begin{proof}The first assertion is  in \cite[II-E.5.19]{SS},
and the second is a special case of \cite[3.13]{DM}.
\end{proof}

\begin{lemma}\label{tt2}
 If  ${\mathbf H}$ has rank $r$ and if $T$ is a maximal torus of $H$, then $|T|\leq (q_0+1)^r$, where $q_0$ is the 
absolute value of an eigenvalue of $F$ in its action on the
weight lattice of ${\mathbf H}$.
\end{lemma}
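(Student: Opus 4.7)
The plan is to reduce the estimate to a computation of eigenvalues of $F$ on a rank $r$ lattice, using the classical formula for the order of a finite torus.

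First I would recall that, for an $F$-stable maximal torus $\mathbf{T}$ of $\mathbf{H}$, one has
\[
|T| = |\mathbf{T}^F| = \bigl|\det\bigl(F - \mathrm{id} \,\big|\, Y(\mathbf{T})\otimes\mathbb{R}\bigr)\bigr|,
\]
where $Y(\mathbf{T}) = X_*(\mathbf{T})$ is the cocharacter lattice of $\mathbf{T}$, a free abelian group of rank $r$. (This is a standard fact about connected tori over finite fields; see for instance Carter's book on finite groups of Lie type.)

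Next I would analyse the eigenvalues of $F$ on $Y(\mathbf{T})\otimes\mathbb{R}$. The $F$-conjugacy classification of $F$-stable maximal tori says that $\mathbf{T}$ is $\mathbf{H}$-conjugate to a twist of a fixed reference torus $\mathbf{T}_0$ by some $w\in W$; equivalently, up to isomorphism of $\mathbb{Z}$-modules with a $\mathbb{Z}[F]$-action, the action of $F$ on $Y(\mathbf{T})$ is the composition of the action of $F$ on $Y(\mathbf{T}_0)$ with the finite order automorphism induced by $w$. Because $F$ acts on the weight (equivalently, on the cocharacter) lattice of $\mathbf{T}_0$ as $q_0$ times an automorphism of finite order, and composing with $w$ only multiplies by an operator of finite order, every eigenvalue of $F$ on $Y(\mathbf{T})\otimes\mathbb{C}$ has absolute value exactly $q_0$.

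Finally I would conclude: writing the eigenvalues as $q_0\zeta_1,\dots,q_0\zeta_r$ with $|\zeta_i|=1$, the triangle inequality gives
\[
|T| = \prod_{i=1}^r |q_0\zeta_i - 1| \;\leq\; \prod_{i=1}^r\bigl(q_0|\zeta_i|+1\bigr) \;=\; (q_0+1)^r.
\]
The only non-routine ingredient is the spectral statement in the middle step — the assertion that, independently of the $W$-twist producing $\mathbf{T}$, all eigenvalues of $F$ on $Y(\mathbf{T})\otimes\mathbb{C}$ share the common absolute value $q_0$. This is standard for Frobenius (and Steinberg) endomorphisms, but it is the place where the hypothesis on $q_0$ actually gets used; everything else is bookkeeping.
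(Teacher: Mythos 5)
Your overall structure matches the paper's: write $|T|$ as a characteristic-polynomial determinant on a rank-$r$ lattice, express $F$ as $q_0$ times a finite-order operator so that its eigenvalues are $q_0\zeta_1,\dots,q_0\zeta_r$ with each $|\zeta_i|=1$, and finish with $|q_0\zeta_i-1|\le q_0+1$. The paper proceeds through exactly this chain, citing Carter's $(3.3.5)$ for the determinant formula and reducing the eigenvalue claim to the finiteness of the order of $\sigma^{-1}w$.

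However, your justification of the eigenvalue step — ``composing with $w$ only multiplies by an operator of finite order, so every eigenvalue of $F$ on $Y(\mathbf T)\otimes\mathbb C$ has absolute value exactly $q_0$'' — contains a genuine gap. What you need is that the composite $w\sigma$ (equivalently $\sigma^{-1}w$, up to conventions) has finite order, and this does \emph{not} follow from the separate finiteness of the orders of $w$ and $\sigma$: the product of two finite-order operators on a lattice can have eigenvalues off the unit circle (one can produce $2\times 2$ integer examples where both factors are involutions but the product has characteristic polynomial $x^2-3x+1$). The paper closes this gap by invoking the fact that $\sigma$ normalizes $W$, so that $\sigma^{-1}w$ lives in the finite group $\langle W,\sigma\rangle$ and is therefore of finite order, hence diagonalizable with root-of-unity eigenvalues. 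You do flag the eigenvalue claim as ``standard,'' but if you want a self-contained argument rather than a black-box citation, you must supply the normalization fact; otherwise the sentence you wrote is a non sequitur.
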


\begin{proof}Note that $q_0$ is defined in~\cite[page~35]{Ca}. If $H$ is
non-twisted, then the proof can be found in~\cite[page~75]{Ko}. The
argument for the general case is similar. Indeed, we have $F=q_0\cdot
\sigma$, where $\sigma$ is an automorphism of finite order of the
${\mathbb Z}$-lattice $Y_0$ defined in
\cite[page 85]{Ca}. Recall that there exists a surjective correspondence between the elements of the Weyl group $W$ and the $H$-conjugacy classes of maximal tori of $H$, see \cite[(3.3.3)]{Ca}. Therefore, every maximal torus (up to conjugacy) is determined by some $w\in W$.

By \cite[(3.3.5)]{Ca} and its proof, $|T|=|\det
(q_0\cdot {\rm Id} - \sigma^{-1} w)|$, where $w\in W$ is an element
defining $T$. It is known that $\sigma$ normalizes $W$, see, for instance,~\cite[comments leading to Theorem~32]{St}. Thus, $\sigma^{-1} w$ is of finite order, and hence diagonalizable over ${\mathbb C}$. Moreover, the 
eigenvalues of $\sigma^{-1}w$ are roots of unity. Therefore,
$|T|=\prod _{i=1}^r (q_0-\varepsilon_i)$, where $\varepsilon_1,\ldots
,\varepsilon_r$ are  the eigenvalues of $\sigma^{-1} w$, see~\cite[(3.3.5)]{Ca}. As $|q_0-\varepsilon_i|\leq
 q_0+1$, the lemma follows. (Here we have used $|x|$ for the absolute
value the complex number $x$.)
\end{proof}

Observe that if $H$ is a group of Lie type with field parameter $q$, then $q=q_0$ except for $^{2}B_2(q)$, $^{2}G_2(q)$ and $^{2}F_4(q)$ where $q=q_0^2$.

Finally, we conclude with a numerical lemma which we frequently use in what follows.

\begin{lemma}\label{pentagonal}Let $q\geq 2$. Then $\prod_{i= 1}^\infty\left(1-{q^{-i}}\right)>1-q^{-1}-q^{-2}$.
\end{lemma}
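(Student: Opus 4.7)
The plan is to recognise the left-hand side as the specialisation at $x=q^{-1}$ of the Euler product $\prod_{i\ge 1}(1-x^i)$, and then appeal to Euler's pentagonal number theorem
$$\prod_{i=1}^{\infty}(1-x^i)=1+\sum_{k=1}^{\infty}(-1)^k\bigl(x^{k(3k-1)/2}+x^{k(3k+1)/2}\bigr),$$
which converges absolutely for $|x|<1$. Substituting $x=q^{-1}$ (valid since $q\ge 2$) gives
$$\prod_{i=1}^{\infty}(1-q^{-i})=1-q^{-1}-q^{-2}+q^{-5}+q^{-7}-q^{-12}-q^{-15}+q^{-22}+q^{-26}-\cdots,$$
so the inequality is equivalent to showing that the tail obtained by discarding the terms $1-q^{-1}-q^{-2}$ is strictly positive.

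Next I would bracket the remaining series by consecutive pairs of pentagonal pairs. For each $j\ge 1$, the contribution from $k=2j$ (positive sign) and $k=2j+1$ (negative sign) is
$$q^{-j(6j-1)}+q^{-j(6j+1)}-q^{-(2j+1)(3j+1)}-q^{-(2j+1)(3j+2)}.$$
The key observation is the elementary exponent comparison $j(6j-1)<(2j+1)(3j+1)$ and $j(6j+1)<(2j+1)(3j+2)$, both of which reduce to trivial inequalities in $j$. Hence for any $q>1$ the term $q^{-j(6j-1)}$ strictly dominates $q^{-(2j+1)(3j+1)}$, and similarly for the other pair, so each bracket is positive. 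Summing over $j\ge 1$ gives the desired strict inequality.

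The main (very minor) obstacle is just to keep bookkeeping of the pentagonal exponents straight so that the pairing really is term-by-term dominant; once that is in place, the inequality is essentially immediate and does not require any hypothesis beyond $q\ge 2$ (in fact it works for any real $q>1$, which is consistent with the uses of the lemma elsewhere in the paper).
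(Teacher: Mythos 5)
Your proof is correct and follows the same underlying idea as the paper, which simply states that the bound is "an immediate consequence of the Euler pentagonal number theorem" and cites Neumann--Praeger for a proof, without writing out the argument. You have supplied the missing details: expanding the product via the pentagonal number theorem, and then pairing the $k=2j$ and $k=2j+1$ terms and checking the elementary exponent inequalities $j(6j-1)<(2j+1)(3j+1)$ and $j(6j+1)<(2j+1)(3j+2)$ (both equivalent to $6j+1>0$ and $6j+2>0$) to see each bracket is positive. The grouping is legitimate because the pentagonal series converges absolutely for $|x|<1$. Your observation that the argument works for any real $q>1$ is also correct.
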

\begin{proof}
This is an immediate consequence of the Euler pentagonal number theorem, for a proof see~\cite[Lemma~$3.5$]{NP}.
\end{proof}

\section{Character degree sum for symmetric and alternating groups}

It is well-known that all irreducible representations of the symmetric group $\Sym(n)$ can be realized over the rational numbers~\cite[Theorem~$4.12$]{James}. Therefore,
by Lemma~\ref{is1}, the character degree sum $\Sigma(\Sym(n))$  equals the number of solutions of the equation $x^2=1$  in $\Sym(n)$.

We start by singling out the following lemma, which we will use quite often.

\begin{lemma}\label{Sylowp}Let $n\geq 1$, let $p$ be a prime and let $P$ be a Sylow $p$-subgroup of $\Sym(n)$. Then $|P|\leq 2^{n-1}$.
\end{lemma}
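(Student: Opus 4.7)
The plan is to reduce to the standard Legendre/Kummer formula for the $p$-adic valuation of $n!$ and then compare the resulting exponent against $2^{n-1}$ by a purely numerical inequality in $p$.

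First I would recall that the order of a Sylow $p$-subgroup of $\Sym(n)$ equals $p^{v_p(n!)}$, and that by Legendre's formula
\[
v_p(n!)=\frac{n-s_p(n)}{p-1},
\]
where $s_p(n)$ is the sum of the base-$p$ digits of $n$. Since $n\geq 1$ forces $s_p(n)\geq 1$, this gives the clean bound
\[
|P|=p^{v_p(n!)}\leq p^{(n-1)/(p-1)}.
\]

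Next I would reduce the required inequality $p^{(n-1)/(p-1)}\leq 2^{n-1}$ to the claim that $p^{1/(p-1)}\leq 2$, i.e., $p\leq 2^{p-1}$, for every prime $p$. This is the only numerical content: it holds with equality at $p=2$, and for $p\geq 3$ one checks $p\leq 2^{p-1}$ by a trivial induction (or by noting that $2^{p-1}/p$ is increasing in $p$ starting from $p=3$, where $4/3>1$). Raising both sides of $p^{1/(p-1)}\leq 2$ to the power $n-1\geq 0$ gives the desired estimate, and the $n=1$ case is the vacuous $1\leq 1$.

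There is essentially no obstacle here; the only subtle point is simply remembering to note that $s_p(n)\geq 1$ so that the exponent $(n-s_p(n))/(p-1)$ is at most $(n-1)/(p-1)$, which is where the factor of $2^{n-1}$ (rather than $2^n$) comes from. The statement is sharp at $p=2$ and $n$ a power of $2$, which is a useful sanity check for the final write-up.
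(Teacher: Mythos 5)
Your proof is correct and follows essentially the same route as the paper: the paper derives the exponent $\sum_i a_i\frac{p^i-1}{p-1}=\frac{n-s_p(n)}{p-1}$ from the wreath-product structure of the Sylow subgroup rather than citing Legendre's formula by name, but the resulting bound $|P|\le p^{(n-1)/(p-1)}$ and the closing observation that $p^{1/(p-1)}\le 2$ are identical.
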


\begin{proof}
Let $n=a_0+a_1p+\cdots +a_kp^k$ be the $p$-adic expansion of $n$, that is, $a_0,\ldots,a_k\in \{0,\ldots,p-1\}$ with $a_k\neq 0$. Observe that a Sylow $p$-subgroup of $\Sym(p^i)$ has order $p^{\frac{p^i-1}{p-1}}$. Now, from the structure of $P$ and from an easy computation, it follows  that
\begin{equation}\label{eqsym1}|P|=\prod_{i=0}^kp^{a_i\frac{p^i-1}{p-1}}.
\end{equation}

We have
\begin{equation}\label{eqsym2}\sum_{i=0}^ka_i\frac{p^i-1}{p-1}=\frac{1}{p-1}\left(\sum_{i=0}^ka_ip^i-\sum_{i=0}^ka_i\right)=\frac{1}{p-1}\left(n-\sum_{i=0}^ka_i\right)\leq \frac{n-1}{p-1}.
\end{equation}
It is easy to verify that $x^{1/(x-1)}$ is a decreasing function of $x>0$, and hence $p^{1/(p-1)}\leq 2^{1/(2-1)}=2$. So, the proof follows from~\eqref{eqsym1} and~\eqref{eqsym2}.
\end{proof}

The upper bound in Lemma~\ref{Sylowp} is sharp for $p=2$ and for $n$ a power of $2$.

\begin{propo}\label{pp1} Let $n\geq 5$. Then, for every prime $p$, we have either $$\Sigma(\Sym(n))<|\Alt(n)|_{p'},$$
or one of the following holds
\begin{description}
\item[$(1)$]$n=5$ and $p\in \{2,5\}$;
\item[$(2)$]$n=6$ and $p\in \{2,3\}$;
\item[$(3)$]$n=8$ and $p=2$.
\end{description}
\end{propo}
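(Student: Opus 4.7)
The plan is to translate the statement into a counting question, use the explicit recurrence for the count, and then separate an asymptotic regime from a finite case check.

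\textbf{Step 1 (character degree sum $=$ involution count).} It is classical that every irreducible representation of $\Sym(n)$ is realisable over $\mathbb Q$ and is in particular orthogonal. By Lemma~\ref{is1}, therefore,
\[
\Sigma(\Sym(n))=t(n):=|\{x\in\Sym(n)\mid x^{2}=1\}|,
\]
and $t(n)$ satisfies the recurrence $t(n)=t(n-1)+(n-1)t(n-2)$ with $t(0)=t(1)=1$. This turns the proposition into a purely numerical inequality between $t(n)$ and $|\Alt(n)|_{p'}$.

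\textbf{Step 2 (lower bound on $|\Alt(n)|_{p'}$).} Let $P$ be a Sylow $p$-subgroup of $\Sym(n)$. By Lemma~\ref{Sylowp}, $|P|\leq 2^{n-1}$. When $p$ is odd, $P\leq\Alt(n)$, so $|\Alt(n)|_{p'}=(n!/2)/|P|\geq n!/2^{n}$; when $p=2$, the Sylow $2$-subgroup of $\Alt(n)$ has order $|P|/2$, so $|\Alt(n)|_{2'}=n!/|P|\geq n!/2^{n-1}$. In both cases we obtain the uniform lower bound
\[
|\Alt(n)|_{p'}\ \geq\ \frac{n!}{2^{n}}.
\]
Thus it suffices to show $t(n)<n!/2^{n}$ for all sufficiently large $n$, and then to sharpen the Sylow estimate by using the explicit formula $|P|=p^{(n-s_{p}(n))/(p-1)}$ in the leftover range.

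\textbf{Step 3 (asymptotic inequality).} I would prove $t(n)<n!/2^{n}$ for $n\geq n_{0}$ by setting $u_{n}:=2^{n}t(n)/n!$ and deriving from the recurrence the identity
\[
u_{n}=\frac{2}{n}\,u_{n-1}+\frac{4}{n}\,u_{n-2}.
\]
Once two consecutive values $u_{n-1},u_{n-2}$ are strictly less than $1$, the coefficients $2/n$ and $4/n$ force $u_{n}<1$ for every subsequent $n$ (take $n_{0}$ large enough that $6/n_{0}<1$). One then checks the base of the induction numerically; a direct computation of $t(n)$ using the recurrence shows that $u_{n}<1$ from some explicit $n_{0}$ onward (in practice $n_{0}$ is a small two-digit number), settling the claim for all $n\geq n_{0}$.

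\textbf{Step 4 (small cases and expected obstacle).} For each of the finitely many remaining values $5\leq n<n_{0}$, I would compute $t(n)$ from the recurrence and, for every prime $p\leq n$, compute $|\Alt(n)|_{p'}=(n!/2)/|P_{p}(\Alt(n))|$ with $|P_{p}(\Alt(n))|$ read off from $s_{p}(n)$. Comparing the two numbers pair by pair produces a finite table of exceptions, which one then matches against the list $(n,p)\in\{(5,2),(5,5),(6,2),(6,3),(8,2)\}$ in the statement. The main obstacle is precisely this intermediate range: the crude bound $|\Alt(n)|_{p'}\geq n!/2^{n}$ from Lemma~\ref{Sylowp} is rather loose, and for small-to-medium $n$ the inequality $t(n)<n!/2^{n}$ fails, so one must replace the uniform bound by the exact value $|P_{p}(\Sym(n))|=p^{(n-s_{p}(n))/(p-1)}$ in the case-by-case verification. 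Once this is done all exceptions are localised and the proposition follows.
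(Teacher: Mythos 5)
Your proof is correct and takes a genuinely different route to the asymptotic part. Both you and the paper start from the Frobenius--Schur reduction $\Sigma(\Sym(n))=t(n)$ and the same crude Sylow bound $|\Alt(n)|_{p'}\geq n!/2^n$ coming from Lemma~\ref{Sylowp}, and both end with a finite case check. Where you diverge is in how you prove $t(n)<n!/2^n$ for large $n$: the paper imports the ratio bound $\Sigma_n/\Sigma_{n-1}\leq n^{1/2}+1$ from Chowla--Herstein--Moore, iterates it to $\Sigma_n\leq 2^n(n!)^{1/2}$, and compares that with $n!/2^n$, which only closes for $n\geq 40$ and then requires a supplementary estimate for $19\leq n\leq 40$ and a \texttt{magma} check for $5\leq n\leq 18$. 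Your argument instead works directly with the classical two-term recurrence $t(n)=t(n-1)+(n-1)t(n-2)$ and the normalisation $u_n=2^nt(n)/n!$, giving the linear recurrence $u_n=\frac{2}{n}u_{n-1}+\frac{4}{n}u_{n-2}$. This is self-contained (no appeal to Chowla's lemma), and it is sharper: once $u_{n-1},u_{n-2}<1$ with $n\geq 7$, the relation forces $u_n<6/n<1$ and the induction propagates; a short numerical check shows $u_{13},u_{14}<1$, so $t(n)<n!/2^n$ already for $n\geq 13$, leaving a noticeably smaller residual range $5\leq n\leq 12$ for the exact-Sylow comparison. In short, the paper buys a one-line citation to an external estimate at the cost of a larger finite check, while your normalised-recurrence argument is elementary, tighter, and shrinks the case analysis.

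One minor point worth making explicit in Step~2: for $p=2$ the Sylow subgroup of $\Alt(n)$ has index $2$ in that of $\Sym(n)$, giving the even better bound $|\Alt(n)|_{2'}\geq n!/2^{n-1}$; the uniform bound $n!/2^n$ you quote is the weaker of the two cases (odd $p$), so it is indeed a valid lower bound for all $p$, but a reader may pause there, so it is worth saying that $n!/2^n$ is the minimum of the two expressions rather than a single derivation covering both. Also, as you note, for primes $p>n$ the inequality $t(n)<|\Alt(n)|_{p'}=n!/2$ is immediate, so restricting the finite check to $p\leq n$ is legitimate.
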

\begin{proof}
Write $G=\Sym(n)$ and $S=\Alt(n)$.
In particular, from Lemma~\ref{Sylowp} we have
\begin{equation}\label{eq0}
|S|_{p'}\geq \frac{|S|}{|P|}=\frac{n!/2}{2^{n-1}}=\frac{n!}{2^n}.
\end{equation}

Write $\Sigma_n=\Sigma(\mathrm{Sym}(n))$. Lemma~$2$ in~\cite{Chowla} shows that
\begin{equation}\label{eq3}
\frac{\Sigma_{n}}{\Sigma_{n-1}}\leq n^{1/2}+1,
\end{equation}
for every $n\geq 1$ (where $\Alt(0)=\Sym(0)$ is the group of order $1$). Using~\eqref{eq3}, we get $\Sigma_n\leq 2\Sigma_{n-1}$ and hence we inductively obtain
\begin{equation}\label{eq5}
\Sigma_n\leq 2^n(n!)^{1/2}.
\end{equation}

Now it is a tedious computation to show that $$2^n(n!)^{1/2}<\frac{n!}{2^n},$$
for every $n\geq 40$. In particular, by~\eqref{eq0} and~\eqref{eq5}, the proof follows immediately for $n\geq 40$.
Another direct computation shows that, for every $n\in \{19,\ldots,40\}$, we have
$$\prod_{m=1}^n(m^{1/2}+1)<\frac{n!}{2^n}.$$
Hence, for $n\in \{19,\ldots,40\}$, the proof follows again from~\eqref{eq0} and~\eqref{eq5}.

Finally, the cases $5\leq n\leq 18$ can be checked one-by-one with the invaluable help of \texttt{magma}~\cite{magma} (observe that the character table of  $\Sym(n)$ is readily available in \texttt{magma} for every $n\leq 18$).
\end{proof}

\section{Exceptional groups of Lie type}\label{exceptional}

In this section $G$ is an exceptional group of Lie type having
adjoint isogeny type. This means that $G={\mathbf G}^{F}$, where
${\mathbf G}$ is an exceptional simple algebraic group of adjoint
isogeny type and $F$ is a Frobenius endomorphism of ${\mathbf
G}$. Note that $G$ is not necessarily simple.

The computer algebra project \texttt{CHEVIE}~\cite{chevie} for
symbolic computations with generic character tables of groups of
Lie type is a rather powerful tool for computing  the character
degree sum of $G$. In fact, we have the following result.

\begin{propo}\label{execp}Let $G$ be an
exceptional group of Lie type having adjoint isogeny. Then
$\Sigma(G)<C$, where $C$ is in the second row
of the column corresponding to $G$ in Table~$\ref{table1}$ (in the table, $q$ is the field parameter of $G$).
\end{propo}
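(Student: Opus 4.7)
The plan is to exploit the fact that CHEVIE stores generic character tables for each exceptional family of Lie type, so that for a fixed family $\mathbf{G}$ of adjoint isogeny type, the character degrees of $G=\mathbf{G}^F$ are given by explicit polynomial (or piecewise polynomial) expressions in the field parameter $q$. I would loop over the finite list of exceptional families of adjoint type, namely ${}^2B_2(q),\ {}^2G_2(q),\ G_2(q),\ {}^3D_4(q),\ {}^2F_4(q),\ F_4(q),\ E_6(q),\ {}^2E_6(q),\ E_7(q),\ E_8(q)$, and for each one extract the list of generic character degrees from CHEVIE.

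Next, I would sum these degrees symbolically to express $\Sigma(G)$ as a polynomial in $q$. For several families the formula for a unipotent or cuspidal character degree depends on the congruence class of $q$ modulo a small integer (typical moduli in the exceptional series are $2$, $3$, $4$, $6$), so the sum is really a piecewise polynomial, with one polynomial per relevant congruence class. Comparing each of these polynomials with the target bound $C(q)$ displayed in Table~\ref{table1} reduces the proposition to verifying a short list of polynomial inequalities of the form $\Sigma(G)(q) < C(q)$. Each such inequality is verified by checking that the leading coefficient of $C(q)-\Sigma(G)(q)$ is positive (which, given that $|G|_{p'}$ itself grows like $q^{|\Phi^+|}$, typically matches the dominant term of $\Sigma(G)$ up to a small constant factor) and then confirming the finitely many remaining cases by substituting the smallest admissible values of $q$ for that family (so $q\geq 8$ for ${}^2B_2$, $q\geq 27$ for ${}^2G_2$, $q\geq 2$ for the untwisted families of adjoint type, etc.).

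The main obstacle is really bookkeeping rather than conceptual novelty: for the large groups, notably $E_7$ and $E_8$, there are hundreds of generic character degree formulas, several of which split according to the congruence of $q$, and the symbolic arithmetic must be carried out carefully so as not to miscount. To control this I would treat each congruence class of $q$ separately within CHEVIE, use \texttt{CharTable} together with the symbolic evaluator to compute $\Sigma(G)$ as a polynomial in $q$ in each case, and then compare it with $C(q)$. As a safety check, the small-parameter boundary cases such as ${}^2B_2(8)$, $G_2(2)'$, ${}^3D_4(2)$, $F_4(2)$, $E_6(2)$, ${}^2E_6(2)$, $E_7(2)$ and $E_8(2)$ can be verified directly against their tabulated character data (in the Atlas or inside CHEVIE), ensuring that the inequality $\Sigma(G)<C$ holds in every case. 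No new ingredient beyond CHEVIE-based symbolic computation and polynomial comparison is needed.
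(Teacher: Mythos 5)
Your proposal is essentially identical to the paper's proof: both rely on the CHEVIE-generated generic character degree data (the paper pulls it from L\"ubeck's tables, which were computed with CHEVIE), sum the degrees with multiplicities to get a closed-form expression in $q$, and then verify $\Sigma(G)<C(q)$ by polynomial comparison. One very small point of care: for ${}^2B_2$, ${}^2G_2$ and ${}^2F_4$ the sum is a polynomial in $q_0=\sqrt{q}$ (with $\sqrt{2q}$ or $\sqrt{3q}$ terms), not in $q$ alone, as the paper's displayed expression for ${}^2F_4(q)$ makes explicit, so the comparison must be done in that variable.
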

\begin{proof}
The proof is an easy computer computation. Here we discuss in detail the case that $G={^{2}}F_4(q)$ (all other cases are similar). The character degrees together with their multiplicities in \texttt{magma}~\cite{magma} format are available in~\cite{Lubeck}. (These values were obtained with the computer algebra package \texttt{CHEVIE}.)  Now, a computation gives
\begin{eqnarray*}
\Sigma(G)&=&
q^{14}-q^{13}+\sqrt{2q}q^{12}+q^{12}-\sqrt{2q}q^{11}+q^{11}-\sqrt{2q}q^{10}+\frac{4}{3}q^{10}\\
&&+3\sqrt{2q}q^{9}-2q^{9}-\sqrt{2q}q^{8}+2q^{8}
-3\sqrt{2q}q^{7}-q^{7}+3\sqrt{2q}q^{6}-\frac{8}{3}q^{6}\\&&+\sqrt{2q}q^{5}+q^{5}
-3\sqrt{2q}q^4+\sqrt{2q}q^3-2q^3+\sqrt{2q}q^2+\frac{7}{3}q^2-\sqrt{2q}q.
\end{eqnarray*}
Again with the help of a computer (or with a direct computation)
we see that this number is at most $q^{14}$.
\end{proof}

\begin{propo}\label{exp1}Let $G$ be an exceptional group of Lie type of characteristic $\ell$. Then $\Sigma(G)<2\cdot|G|_{\ell'}$.
\end{propo}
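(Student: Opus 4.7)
My plan is to read off the proposition directly from the bounds produced in Proposition~\ref{execp}, combined with the elementary order formulas for finite exceptional groups of Lie type. For each exceptional family of adjoint isogeny type appearing in Table~\ref{table1} (namely $G_2$, ${}^3D_4$, $F_4$, $E_6$, ${}^2E_6$, $E_7$, $E_8$, ${}^2B_2$, ${}^2G_2$ and ${}^2F_4$), the table supplies an explicit polynomial majorant $C(q)$ for $\Sigma(G)$, where $q$ is the field parameter. Hence it is enough to verify, row by row, the polynomial inequality $C(q)\leq 2\cdot|G|_{\ell'}$ in the single variable $q$; this reduces the proposition to a finite list of elementary comparisons.

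The right-hand side is computable from the standard order formula, which writes $|G|=q^N\cdot|G|_{\ell'}$, with $N$ the number of positive roots of the underlying root system and $|G|_{\ell'}$ an explicit product of factors of the form $q^{d}-\varepsilon$ with $\varepsilon\in\{1,-1\}$ (adapted to the twist for the non-split types). For instance, for the worked case $G={}^2F_4(q)$ one has $|G|_{\ell'}=(q^6+1)(q^4-1)(q^3+1)(q-1)$, whose leading term in $q$ is $q^{14}$, precisely matching the leading term of the bound $C(q)=q^{14}$ delivered by Proposition~\ref{execp}; the inequality $q^{14}<2(q^6+1)(q^4-1)(q^3+1)(q-1)$ then follows by direct expansion for every admissible $q$. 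In each of the remaining rows, $C(q)$ matches the polynomial growth of $|G|_{\ell'}$ to leading order with a coefficient safely below~$2$, so the comparison is asymptotically transparent.

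The main obstacle will be the smallest admissible values of the field parameter $q$ (the precise threshold depending on the type), where the leading-order asymptotics leave essentially no slack and where the lower-order corrections in $C(q)$ involving radical terms such as $\sqrt{2q}$ for the twisted types are no longer negligible relative to $|G|_{\ell'}$. For these finitely many exceptional orders I would fall back on a direct numerical verification of both $\Sigma(G)$ and $|G|_{\ell'}$ using the generic character tables supplied by \texttt{CHEVIE}, exactly in the spirit of the proof of Proposition~\ref{execp}. The argument therefore decomposes cleanly into a transparent polynomial-asymptotic step that handles all sufficiently large $q$, together with a machine-aided finite check for the remaining small parameters.
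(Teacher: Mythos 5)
Your proposal matches the paper's proof: the paper compares the majorant $C$ from Table~\ref{table1} against $2\cdot|G|_{\ell'}$ via the order formula and finds the inequality $C<2\cdot|G|_{\ell'}$ holds for all exceptional $G$ except $F_4(2)$. For that single case the paper falls back on the actual character table (from the Atlas) to verify $\Sigma(F_4(2))<2\cdot|F_4(2)|_{2'}$ — precisely the small-parameter numerical check you anticipate.
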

\begin{proof}Using the information in Table~\ref{table1} and the order of $G$, with a case-by-case analysis and with  a computation we see that $C<2\cdot|G|_{\ell'}$ for $G\neq F_4(2)$. In particular, for $G\neq F_4(2)$, the proof follows from Proposition~\ref{execp}. 
Finally, using the character table of $F_4(2)$ in~\cite{Atl}, we also get $\Sigma(F_4(2))<2\cdot|F_4(2)|_{2'}$.
\end{proof}

\medskip
\begin{table}[!ht]
\begin{tabular}{|c|ccccc|cccccc|}\hline
$G$&$^{2}B_2(q)$ &$^{3}D_4(q)$ &$G_2(q)$ &$^{2}G_2(q)$&$F_4(q)$\\
 $C$&$q^3$            &$q^{16}+q^{13}$&
$q^8+\frac{3}{2}q^6$&$q^4$&$q^{28}+q^{27}$\\\hline\hline
$G$&$^{2}F_4(q)$&$E_6(q)$&$^{2}E_6(q)$&$E_7(q)$&$E_8(q)$\\
$C$&$q^{14}$&$q^{42}+q^{38}$&$q^{42}+q^{39}$&$q^{70}+q^{67}$&$q^{128}+q^{125}$\\\hline
\end{tabular}\medskip
\caption{}\label{table1}
\end{table}

\begin{propo}\label{execbound}Let $G$ be a non-soluble exceptional group of Lie type
of characteristic $\ell$ and having adjoint isogeny type. Let $S$
be the non-abelian simple composition factor of $G$. If $p$ is a prime with $p\neq \ell$, then
either $\Sigma(G)<|S|_{p'}$, or
\begin{description}
\item[$(1)$]$G=G_2(2)$ and $p=3$, or
\item[$(2)$]$G={^2}G_2(3)$ and $p=2$.
\end{description}
\end{propo}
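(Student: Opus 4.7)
The plan is to combine the general upper bound $\Sigma(G)<C$ from Proposition~\ref{execp} with a lower bound on $|S|_{p'}$ that exploits the hypothesis $p\neq \ell$. Let $P$ be a Sylow $p$-subgroup of $G$. Since $p\neq\ell$, Lemmas~\ref{ss1} and~\ref{tt2} yield
$$|P|\leq |T|\cdot|W|\leq (q_0+1)^r\cdot|W|,$$
where $r$, $W$ and $q_0$ depend only on the type of $\mathbf{G}$. Setting $n=[G:S]$, a standard computation gives $|S|_p=|G|_p/n_p$, hence
$$|S|_{p'}=\frac{|G|_{p'}}{n_{p'}}\;\geq\;\frac{|G|}{n\,(q_0+1)^r\,|W|}.$$
Together with $\Sigma(G)<C$, this reduces the proposition to verifying the purely numerical inequality
\begin{equation}\label{sufftoshow}
C\cdot n\cdot(q_0+1)^r\cdot|W|<|G|.
\end{equation}

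For each entry of Table~\ref{table1}, both sides of~\eqref{sufftoshow} are explicit polynomials in $q$ (or in $q_0$ for the Suzuki--Ree families). An inspection shows that $\deg_q C+r$ is strictly smaller than $\dim\mathbf{G}=\deg_q|G|$ for every exceptional type, and that $|W|$ and $n$ are bounded constants. Consequently, for each type~\eqref{sufftoshow} holds as soon as $q$ exceeds an effective small threshold; moreover, for any such $q$ one has $n=1$, so the factor $n$ plays no role in the generic argument.

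The remaining task is to verify the proposition for the finite list of small groups not covered by the polynomial estimate, namely ${}^{2}B_2(8)$, $G_2(2)$, $G_2(3)$, ${}^{2}G_2(3)$, ${}^{2}G_2(27)$, ${}^{3}D_4(2)$, ${}^{2}F_4(2)$, $F_4(2)$ and a handful of others. For each of these the character table is available in~\cite{Atl} or through \texttt{CHEVIE}, so $\Sigma(G)$ can be computed exactly and compared with $|S|_{p'}$ for every prime $p\neq\ell$ dividing $|G|$. This finite check singles out precisely $(G_2(2),3)$ and $({}^{2}G_2(3),2)$ as the genuine exceptions: in both cases $G$ fails to be simple ($n=2$ and $n=3$ respectively), and the combined effect of the factor $n$ and the small value of $|G|$ is just enough to break~\eqref{sufftoshow}.

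The main technical obstacle is this last case-by-case verification for small $q$: one must check every prime $p\neq\ell$ with $p\bigm||G|$, and, when $n>1$, carefully distinguish Sylow orders in $G$ from those in $S$. The rest of the proof is routine polynomial estimation, the only delicate points being the low-rank, low-$q$ cases (such as $G_2(q)$ and ${}^{2}G_2(q)$ at $q=2,3,27$), where the Weyl-group factor $|W|$ is comparable in size to $|G|_{\ell'}$.
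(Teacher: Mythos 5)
Your proposal follows essentially the same route as the paper: invoke Proposition~\ref{execp} for $\Sigma(G)<C$, apply Lemmas~\ref{ss1} and~\ref{tt2} to bound the Sylow $p$-subgroup by $(q_0+1)^r|W|$, reduce to a polynomial inequality, and finish the small cases by direct inspection (e.g.\ via~\cite{Atl}), thereby isolating $(G_2(2),3)$ and $({}^{2}G_2(3),2)$. One small slip: the claim that ``for any such $q$ one has $n=1$'' is false in general (for $E_6(q)$, ${}^{2}E_6(q)$, $E_7(q)$ in adjoint type one has $n=[G:S]$ equal to $(3,q\mp1)$ or $(2,q-1)$, which can exceed $1$ for arbitrarily large $q$); this does not damage the argument since $n$ is still a bounded constant absorbed by the polynomial estimate, but the justification should read ``$n$ is bounded'' rather than ``$n=1$''.
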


\begin{proof}
Suppose first that $G$ is not $^{2}B_2(q)$, $^{2}G_2(q)$ or $^{2}F_4(q)$, and let $r$ be the rank of the corresponding algebraic group. Let $W$ be the
Weyl group of ${\bf G}$ and let $P$ be a Sylow $p$-subgroup of $G$.
By Lemmas~\ref{ss1} and~\ref{tt2}, we have
$|P|\leq (q+1)^r|W|_p\leq (q+1)^r|W|$. Using the upper bound $C$
for $\Sigma(G)$ obtained in
Proposition~\ref{execp}, we see with a case-by-case analysis and
with the help of a computer that (for $G\neq G_2(2)$)
$$(\dag)\qquad C<\frac{|S|}{(q+1)^r|W|},$$ from which the lemma
immediately follows. Here we discuss in detail the case $G=G_2(q)$. Assume first that $q\neq 2$. So, $G=S$. In this case,
$C=q^8+3q^6/2$, $r=2$, $|W|=12$ and the inequality $(\dag)$
becomes
$$q^8+\frac{3}{2}q^6<\frac{q^6(q^6-1)(q^2-1)}{12(q+1)^2},$$ which
is easily seen to be true for $q\neq 2$. Assume now that $q=2$.
So, $S=(G_2(2))'\cong \mathrm{PSU}_3(3)$. Now,  we  use~\cite{Atl}
to see that the character degree sum of $G_2(2)$  is $328$ and
that $|S|_{p'}\leq 328$ only for $p=3$.

Suppose now that $G$ is one of $^{2}B_2(q)$ with $q=2^{2m+1}$ (for $m\geq 1$), $^{2}F_4(q)$ with $q=2^{2m+1}$ (for $m\geq 0$), or $^{2}G_2(q)$ with $q=3^{2m+1}$ (for $m\geq 0$). Write $q_0=\sqrt{q}$. By Lemmas~\ref{ss1} and~\ref{tt2}, we have
$|P|\leq (q_0+1)^r|W|_p\leq (q_0+1)^r|W|$. So, as in the previous paragraph, the lemma
follows with a case-by-case analysis using Proposition~\ref{execp} and the invaluable support of a computer. Here we give full details for $G=\,^{2}F_4(q)$. So, $S=G$ if $m>0$ and $|S|=|G|/2$ if $m=0$. By Proposition~\ref{execp}, we have $\Sigma(G)<q^{14}$. For $m>0$, it is a computation to see that the inequality
$$q^{14}<\frac{|S|}{1152(q_0+1)^4}=\frac{q^{12}(q^{6}+1)(q^4-1)(q^3+1)(q-1)(q^5+1)}{1152(q_0+1)^4}$$
is always satisfied. Finally, for $m=0$, we  use~\cite{Atl}.
\end{proof}

\section{Linear groups}\label{secPSL}
In this section we only deal with the projective general linear groups and we use some rather sharp results of Gow~\cite{G1}.

\begin{propo}\label{propoPSL}Let $n\geq 2$ be an integer, let $\ell$ be a prime  and let $q$ be a power of $\ell$. Let $G$ be the general linear group $\mathrm{GL}_n(q)$ and let $S$ be the projective special linear group $\PSL_n(q)$.  We have $\Sigma(G)< \left(1-{q^{-2}}-{q^{-4}}\right)^{-1}|G|_{\ell'}.$
Moreover, for $n\geq 4$, if $p$ is a prime with $p\neq \ell$, then
$\Sigma(G)<|S|_{p'}$.
\end{propo}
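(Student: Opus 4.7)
My plan is to use Gow's explicit formula for $\Sigma(\mathrm{GL}_n(q))$ as the backbone. In~\cite{G1}, Gow proves that $\Sigma(G)$ equals the number of invertible symmetric $n\times n$ matrices over $\mathbb{F}_q$. Computing the orbits of $G$ acting on the set of nondegenerate symmetric bilinear forms (their stabilisers being the orthogonal groups attached to the form), one obtains the closed expression
\[
\Sigma(G) \;=\; |G|_{\ell'}\,\prod_{i=1}^{\lfloor n/2\rfloor}\bigl(1-q^{-2i}\bigr)^{-1},
\]
and the first inequality drops out immediately: the finite product is bounded above by the infinite product $\prod_{i\geq 1}(1-q^{-2i})^{-1}$, and Lemma~\ref{pentagonal} applied with $q^2$ in place of $q$ gives $\prod_{i\geq 1}(1-q^{-2i}) > 1 - q^{-2} - q^{-4}$, so that $\Sigma(G) < (1 - q^{-2} - q^{-4})^{-1}|G|_{\ell'}$.

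For the second assertion, I would combine the first bound with the size of a Sylow $p$-subgroup of $S$. Since $|S|$ divides $|G|$, we have $|S|_p \leq |G|_p$, and Lemmas~\ref{ss1} and~\ref{tt2} yield $|G|_p \leq (q+1)^n\, n!$ (the Weyl group of $\mathrm{GL}_n$ being $\mathrm{Sym}(n)$). Using $|S| = |G|/((q-1)\gcd(n,q-1))$ and $|G| = q^{n(n-1)/2}|G|_{\ell'}$, the goal $\Sigma(G) < |S|_{p'} = |S|/|S|_p$ reduces to the numerical inequality
\[
(q-1)\gcd(n,q-1)(q+1)^n n! \;<\; \bigl(1 - q^{-2} - q^{-4}\bigr)\, q^{n(n-1)/2}.
\]
For $n$ and/or $q$ moderately large this is straightforward, since the right-hand side grows like $q^{n^2/2}$ while the left-hand side grows like $q^n\, n!$; a brief case analysis pins down an explicit threshold beyond which the inequality is automatic.

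The main obstacle is the residual list of small pairs $(n,q)$ — typically $(4,2), (4,3), (5,2), (6,2)$ and a few more — where the uniform bound $(q+1)^n n!$ for $|S|_p$ is too loose. For these I would replace the crude estimate by the sharp value of $|S|_p$, which is determined by the multiplicative order of $q$ modulo $p$ via the well-known formula for the Sylow $p$-subgroup of $\mathrm{GL}_n(q)$ when $p\neq \ell$; alternatively, direct inspection of the character tables (or a \texttt{magma} computation in the spirit of earlier sections of the paper) dispatches each remaining pair. In every such case $|S|_p$ turns out to be markedly smaller than the worst-case bound, and the inequality $\Sigma(G) < |S|_{p'}$ is easily verified.
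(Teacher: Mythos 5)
Your first inequality is correct and takes essentially the paper's route: you invoke Gow's exact formula (extended to all characteristics by Klyachko and Macdonald) in the equivalent form $\Sigma(G) = |G|_{\ell'}\prod_{i=1}^{\lfloor n/2\rfloor}(1-q^{-2i})^{-1}$, bound the finite product by the infinite one, and apply Lemma~\ref{pentagonal} with $q^2$ in place of $q$. One small caveat: the ``orbits on nondegenerate symmetric bilinear forms with orthogonal stabilisers'' picture you sketch is only literally correct for odd $q$; in characteristic $2$ the nondegenerate symmetric forms split into alternating and non-alternating types, with symplectic and orthogonal stabilisers respectively. The formula is nonetheless valid for all $q$, so this is harmless as long as you cite Klyachko's proof rather than rely on the orbit count in even characteristic.

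The second part of your proposal has the right skeleton but uses a Sylow bound crude enough to blow up the residual case list well beyond what you acknowledge. You bound $|S|_p \leq |G|_p \leq (q+1)^n\,n!$, using the rank $n$ of $\mathrm{GL}_n$ and the trivial estimate $(n!)_p\leq n!$. The paper instead bounds $|S|_p$ directly through the algebraic group underlying $\mathrm{PSL}_n$, whose rank is $n-1$, and uses Lemma~\ref{Sylowp} to replace $(n!)_p$ by $2^{n-1}$, giving $|S|_p\leq (q+1)^{n-1}\cdot 2^{n-1}=(2(q+1))^{n-1}$. Your bound is worse by the multiplicative factor $(q+1)\,n!/2^{n-1}$, which for $n=4$ equals $3(q+1)$. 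As a result, your sufficient inequality
\[
(q-1)\gcd(n,q-1)\,(q+1)^n\,n! \;<\; \bigl(1-q^{-2}-q^{-4}\bigr)\,q^{n(n-1)/2}
\]
fails, for $n=4$, for all $q$ up to roughly $100$ when $q\equiv 1\pmod 4$ (and up to the thirties or fifties for the other congruence classes), and also for $n=5$, $q\in\{2,3,4\}$ and $n\in\{6,7\}$, $q=2$. Your phrase ``$(4,2),(4,3),(5,2),(6,2)$ and a few more'' therefore seriously understates the residual work: with this bound you would be left with dozens of pairs $(n,q)$ to dispatch by exact Sylow computations. Your fallback plan (use the exact $p$-part of $|\mathrm{GL}_n(q)|$ via the order $d=\mathrm{ord}_p(q)$) would close each case, but the far more efficient fix is simply to use the sharper a priori bound $(2(q+1))^{n-1}$, which shrinks the exceptional list to $n=4,\ q\leq 5$ and $n=5,\ q=2$.
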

\begin{proof}
Gow in~\cite[Theorem~$4$]{G1} determines the exact value of $\Sigma(G)$ as a function of $q$ and $n$, when $\ell$ is odd. In fact, by factoring $q$ in the formulae for $\Sigma(G)$ in~\cite[Theorem~$4$]{G1}, we obtain
\begin{equation}\label{eqqodd}
\Sigma(G)=q^{\frac{n(n+1)}{2}}\prod_{\substack{i\,\textrm{odd}\\1\leq i\leq n}}\left(1-\frac{1}{q^i}\right).
\end{equation}
In a postscript~\cite[page~$505$]{G1}, Gow states that
~\cite[Theorem~$4$]{G1} has been proved independently (for even as
well as for odd $q$) by A.~A.~Klyachko \cite[Corollary 3.3]{K} and
by I.~G.~Macdonald. So,~\eqref{eqqodd} is valid also for $\ell=2$.

Now,
\begin{eqnarray}\label{PSL1}
|G|_{\ell'}&=&\prod_{i=1}^n(q^i-1)=q^{\frac{n(n+1)}{2}}\prod_{i=1}^n\left(1-\frac{1}{q^i}\right)\\\nonumber
&=&q^{\frac{n(n+1)}{2}}\prod_{
\substack{i\,\textrm{odd}\\1\leq i\leq n}
}\left(1-\frac{1}{q^i}\right)
\prod_{\substack{i\,\textrm{even}\\1\leq i\leq n}}\left(1-\frac{1}{q^i}\right)\\\nonumber
&>&\Sigma(G)\cdot \prod_{\substack{i\,\textrm{even}\\2\leq i<\infty}}\left(1-\frac{1}{q^i}\right)>\Sigma(G)(1-q^{-2}-q^{-4}),
\end{eqnarray}
where in the last inequality we have applied Lemma~\ref{pentagonal} with $q$ replaced by $q^2$.
 Now the first part of the lemma follows immediately from~\eqref{eqqodd} and~\eqref{PSL1}.

Let $p$ be a prime with $p\neq \ell$ and let $P$ be  a Sylow $p$-subgroup of $S$. Recall that by Lemma~\ref{Sylowp} we have $(n!)_{p}\leq 2^{n-1}$ and hence $|P|\leq (q+1)^{n-1}(n!)_{p}\leq(2(q+1))^{n-1}$ by Lemmas~\ref{ss1} and~\ref{tt2}. From this we get
\begin{eqnarray}\label{PSL3}
|S|_{p'}&\geq&\frac{|S|}{(2(q+1))^{n-1}}=\frac{q^{n^2-1}}{(n,q-1)(2(q+1))^{n-1}}\prod_{i=2}^{n}\left(1-\frac{1}{q^i}\right)\\\nonumber
&>&\frac{q^{n^2-1}}{(n,q-1)(2(q+1))^{n-1}}\prod_{i=2}^{\infty}\left(1-\frac{1}{q^i}\right)\\\nonumber
&\geq& \frac{q^{n^2-1}}{(n,q-1)(2(q+1))^{n-1}}(1-q^{-1}-q^{-2}),
\end{eqnarray}
where the last inequality follows again from Lemma~\ref{pentagonal}.
Observe that from~\eqref{eqqodd} we have $\Sigma(G)\leq q^{\frac{n(n+1)}{2}}(1-q^{-1})$. Now, for $n\geq 4$, it is a computation with a computer to see that the inequality
$$ \frac{q^{n^2-1}}{(n,q-1)(2(q+1))^{n-1}}(1-q^{-1}-q^{-2})>q^{\frac{n(n+1)}{2}}(1-q^{-1}),$$
is satisfied, except for $n=5$ and $q=2$, or $n=4$ and $q\leq 5$. In particular, apart this handful of exceptions, we see that the inequality $\Sigma(G)<|S|_{p'}$ follows from~\eqref{PSL3}.

For the remaining values of $n$ and $q$ the proof follows with a case-by-case inspection on each of the various possibilities.
 \end{proof}

It is easy to verify that $(1-q^{-2}-q^{-4})^{-1}$ is a decreasing function of $q\geq 2$ and hence it attains its maximum at $q=2$. In particular, it follows from Proposition~\ref{propoPSL} that the character degree sum of $G=\mathrm{GL}_n(q)$ is bounded above by $\frac{16}{11}|G|_{\ell'}$, where $\ell$ is the characteristic of $G$.

\begin{coro}\label{coroPSL}
Let $n\geq 2$ be an integer, let $\ell$ be a prime and let $q$ be a power of $\ell$. Let $G$ be the projective linear group $\mathrm{PGL}_n(q)$ and let $S$ be the projective special linear group $\mathrm{PSL}_n(q)$. If $p$ is a prime with $p\neq \ell$, then either $\Sigma(G)<|S|_{p'}$, or
\begin{description}
\item[$(1)$]$n=3$, $q=2$ and $p=7$,
\item[$(2)$]$n=2$, $\ell=2$ and $q+1=p^t$ for some $t\geq 1$, or
\item[$(3)$]$n=2$, $\ell$ is odd, $q+1=2\cdot p^t$ for some $t\geq 1$, or
\item[$(4)$]$n=2$, $\ell$ is odd, $p=2$ and $q-1=2^t$ for some $t\geq 1$.
\end{description}
\end{coro}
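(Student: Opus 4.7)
The strategy is to pass from $\mathrm{PGL}_n(q)$ to $\mathrm{GL}_n(q)$ whenever feasible, invoking Proposition~\ref{propoPSL}, and to analyse the low-rank cases $n=2,3$ by hand. Since $\mathrm{PGL}_n(q)=\mathrm{GL}_n(q)/Z(\mathrm{GL}_n(q))$ is a quotient, every irreducible character of $\mathrm{PGL}_n(q)$ inflates to an irreducible character of $\mathrm{GL}_n(q)$, and hence $\Sigma(\mathrm{PGL}_n(q))\leq \Sigma(\mathrm{GL}_n(q))$. For $n\geq 4$ and $p\neq \ell$, the second part of Proposition~\ref{propoPSL} gives $\Sigma(\mathrm{GL}_n(q))<|S|_{p'}$ with no exceptions, so this range of the corollary is immediate.

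For $n=3$ the inflation bound is too coarse, since it overestimates $\Sigma(\mathrm{PGL}_3(q))$ by roughly a factor of $q-1$. I would instead use Gow's explicit formula for $\Sigma(\mathrm{PGL}_3(q))$ from~\cite{G1} together with the Sylow estimate $|P|\leq 6(q+1)^2$ furnished by Lemmas~\ref{ss1}, \ref{tt2}, and~\ref{Sylowp} (here the rank is $2$ and the Weyl group is $\Sym(3)$). A direct arithmetic comparison then forces $\Sigma(\mathrm{PGL}_3(q))<|\mathrm{PSL}_3(q)|_{p'}$ for all but finitely many small values of $q$, and the remaining values are verified one-by-one with the aid of the Atlas~\cite{Atl}. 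The only surviving configuration is $(q,p)=(2,7)$, corresponding to the isomorphism $\mathrm{PSL}_3(2)\cong\mathrm{PSL}_2(7)$ and giving item~(1).

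For $n=2$, the sum $\Sigma(\mathrm{PGL}_2(q))$ is classically known and is of order $q^2$, whereas $|\mathrm{PSL}_2(q)|$ is of order $q^3$. Hence a Sylow $p$-subgroup $P$ of $\mathrm{PSL}_2(q)$ violates $\Sigma(\mathrm{PGL}_2(q))<|\mathrm{PSL}_2(q)|_{p'}$ only when $|P|$ consumes essentially all of $q\pm 1$. For $p\neq\ell$ one has $|P|=(q-1)_p$ or $(q+1)_p$ according to whether $p$ divides $q-1$ or $q+1$, except that when $p=2$ and $\ell$ is odd the Sylow $2$-subgroup is dihedral of order $(q^2-1)_2/2$. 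A case analysis split by the parities of $\ell$ and $p$ and by the divisibility of $q\pm 1$ by $p$ then identifies the exceptions as precisely the Diophantine conditions listed in items~(2)--(4).

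The main obstacle is the $n=2$ case: the inequality is tight (both sides are of order $q^2$), and each of the conditions $q+1=p^t$, $q+1=2p^t$, and $q-1=2^t$ arises from a distinct subcase. Careful bookkeeping rather than any deep new ingredient is required to match every extremal solution to the correct item of the conclusion; by contrast, the $n=3$ analysis reduces after a single arithmetic estimate to a finite computational check.
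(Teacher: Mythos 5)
Your proposal matches the paper's argument: for $n\geq 4$ you use the inflation bound $\Sigma(\mathrm{PGL}_n(q))\leq\Sigma(\mathrm{GL}_n(q))$ together with Proposition~\ref{propoPSL}, and for $n=3$ and $n=2$ you combine explicit formulas for $\Sigma(\mathrm{PGL}_n(q))$ with the torus/Weyl Sylow estimates and finish by a parity case analysis. The only cosmetic differences are that the paper sources $\Sigma(\mathrm{PGL}_3(q))=q^2(q^3-1)$ from~\cite{Lubeck} rather than~\cite{G1} and uses the sharper bound $|P|\leq 3(q+1)^2$ (since $|W|_p\leq 3$ for $W=\Sym(3)$) in place of your $6(q+1)^2$, which only shifts the finite threshold for the case-by-case check.
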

\begin{proof}
Let $p$ be a prime with $p\neq \ell$ and let $P$ be a Sylow $p$-subgroup of $S$.
Observe that
$$\Sigma(G)\leq\Sigma(\mathrm{GL}_n(q))$$
So, for $n\geq 4$, the proof follows from Proposition~\ref{propoPSL}.

Assume that $n=3$. Using the information in~\cite{Lubeck} (for instance), we see that $\Sigma(G)=q^2(q^3-1)$. Moreover, by Lemmas~\ref{ss1} and~\ref{tt2}, we have $|P|\leq (q+1)^2|W|_p\leq 3(q+1)^2$ and hence $|S|_{p'}\geq |S|/(3(q+1)^2)$. Now with a computation we see that the inequality $|S|/(3(q+1)^2)>\Sigma(G)$ is satisfied for every $q\geq 8$. The remaining cases can be easily handled with a case-by-case analysis: in each case we have that either $|S|_{p'}>\Sigma(G)$ or part~(1) holds.

Finally, assume that $n=2$.  Using the information in~\cite{Lubeck} (for instance), we see that $\Sigma(G)=q^2$ if $\ell=2$ and $\Sigma(G)=q^2+1$ if $\ell>2$. Suppose that $\ell=2$. Now $\Sigma(G)=q^2$ and $|S|=q(q^2-1)$. If $p$ divides $q-1$, then $|S|_{p'}\geq q(q+1)>q^2=\Sigma(G)$ and hence we may assume that $p$ divides $q+1$. If $q+1$ is not a power of $p$, then $|P|\leq (q+1)/3$ and $|S|_{p'}\geq 3q(q-1)>q^2=\Sigma(G)$. In particular, $|S|_{p'}\leq \Sigma(G)$ only when $q+1$ is a power of $p$ and part~(2) holds. Finally suppose that $\ell>2$ and recall that $\Sigma(G)=q^2+1$. It is an easy computation to see that if part~(3) or~(4) holds, then $|G|_{p'}\leq \Sigma(G)$. Now the rest of the lemma follows with an easy computation distinguishing the case whether $p=2$ or $p>2$.
\end{proof}

\section{Unitary groups}\label{secUnitary}
In this section we follow closely Section~\ref{secPSL} and we study the unitary groups.

\begin{propo}\label{propoPSU}
Let $n\geq 3$ be an integer, let $\ell$ be a prime and let $q$ be a power of $\ell$. Let $G$ be the general unitary group $\mathrm{GU}_n(q)$ and let $S$ be the projective special unitary group $\mathrm{PSU}_n(q)$. We have $\Sigma(G)<\left(1-{q^{-2}}-{q^{-4}}\right)^{-1}|G|_{\ell'}$.
Moreover, for $n\geq 4$, if $p$ is a prime with $p\neq \ell$, then either $\Sigma(G)<|S|_{p'}$, or
\begin{description}
\item[$(1)$]$n\in \{4,5,6\}$, $q=2$ and $p=3$, or
\item[$(2)$]$n=4$, $q=3$ and $p=2$.
\end{description}
\end{propo}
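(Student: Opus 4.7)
The plan is to mirror the proof of Proposition~\ref{propoPSL} as closely as possible, using the Thiem--Vinroot formula~\cite{TV} in place of Gow's. Recall that~\cite{TV} gives a closed form
\[
\Sigma(\mathrm{GU}_n(q))=q^{n(n+1)/2}\prod_{\substack{1\leq i\leq n\\ i\text{ odd}}}\bigl(1-(-q)^{-i}\bigr),
\]
while
\[
|\mathrm{GU}_n(q)|_{\ell'}=\prod_{i=1}^n\bigl(q^i-(-1)^i\bigr)=q^{n(n+1)/2}\prod_{i=1}^n\bigl(1-(-q)^{-i}\bigr).
\]
Taking the ratio cancels the odd-indexed factors, leaving
\[
\frac{|G|_{\ell'}}{\Sigma(G)}=\prod_{\substack{2\leq i\leq n\\ i\text{ even}}}\bigl(1-q^{-i}\bigr)>\prod_{j=1}^{\infty}\bigl(1-q^{-2j}\bigr)>1-q^{-2}-q^{-4},
\]
where the last inequality is Lemma~\ref{pentagonal} applied with $q$ replaced by $q^2$. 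This yields the first assertion of the proposition.

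For the second assertion the plan is to adapt the Sylow estimate from Proposition~\ref{propoPSL}. The ambient algebraic group $\mathrm{GL}_n$ has rank $n$, but once one passes to $S=\mathrm{PSU}_n(q)$ only $n-1$ independent toric factors contribute; the Frobenius eigenvalue satisfies $q_0=q$ and the Weyl group is $\mathrm{Sym}(n)$. So Lemmas~\ref{ss1} and~\ref{tt2} combined with Lemma~\ref{Sylowp} give
\[
|P|\leq (q+1)^{n-1}(n!)_p\leq (2(q+1))^{n-1},
\]
and therefore
\[
|S|_{p'}\geq \frac{q^{n^2-1}}{(n,q+1)\,(2(q+1))^{n-1}}\prod_{i=2}^{n}\bigl(1-(-q)^{-i}\bigr)\geq \frac{q^{n^2-1}(1-q^{-1}-q^{-2})}{(n,q+1)\,(2(q+1))^{n-1}},
\]
again invoking Lemma~\ref{pentagonal} (after replacing each odd-indexed factor $1+q^{-i}$ by the smaller $1-q^{-i}$).

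It then remains to verify
\[
\frac{q^{n^2-1}(1-q^{-1}-q^{-2})}{(n,q+1)\,(2(q+1))^{n-1}}>q^{n(n+1)/2}\bigl(1+q^{-1}\bigr),
\]
the right-hand side being the trivial upper bound for $\Sigma(G)$ coming from the Thiem--Vinroot formula. The exponent of $q$ on the left is $n^2-1$ and on the right is $n(n+1)/2$; their difference $n(n-3)/2-1$ grows quadratically, so for fixed $q\geq 2$ the inequality is eventually automatic, and a short computation with a computer shows that it holds for all $n\geq 7$ and all $q\geq 2$, and for $n\in\{4,5,6\}$ whenever $q$ exceeds a small threshold.

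The remaining finite list of pairs $(n,q)$ with $n\in\{4,5,6\}$ and $q$ small is handled one case at a time by direct inspection of the character degrees (available in~\cite{Atl,Lubeck} and \texttt{magma}~\cite{magma}), and this is where the genuine exceptions $\mathrm{PSU}_4(2)\cong\mathrm{PSp}_4(3)$ at $p=2$ and $\mathrm{PSU}_n(2)$ at $p=3$ for $n\in\{4,5,6\}$ arise. I expect this small-case analysis to be the main (and essentially only) obstacle, since in these groups the Weyl group has enough $p$-torsion relative to the group order that the general Sylow estimate from Lemma~\ref{tt2} is simply too weak; everything else is a direct transcription of the linear-case argument with $(1-q^{-i})$ replaced by $(1-(-q)^{-i})$.
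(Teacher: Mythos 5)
Your first assertion is handled exactly as in the paper: you cancel the odd-indexed factors in the ratio $|G|_{\ell'}/\Sigma(G)$ and invoke Lemma~\ref{pentagonal} with $q^2$, which is precisely what the paper does. The issue is in the second half.

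The critical error is the claim that $q^{n(n+1)/2}(1+q^{-1})$ is ``the trivial upper bound for $\Sigma(G)$'' coming from the Thiem--Vinroot formula. It is not an upper bound at all; it is a \emph{lower} bound. Indeed
\[
\Sigma(G)=q^{n(n+1)/2}\prod_{\substack{1\le i\le n\\ i\ \mathrm{odd}}}\bigl(1+q^{-i}\bigr)
\ge q^{n(n+1)/2}\bigl(1+q^{-1}\bigr),
\]
since for $n\ge 3$ every dropped factor $1+q^{-i}$ exceeds $1$. This is exactly where the unitary case diverges from the linear case: in Proposition~\ref{propoPSL} the odd-indexed factors are $1-q^{-i}<1$, so discarding them \emph{increases} the product and $q^{n(n+1)/2}(1-q^{-1})$ really is an upper bound; but after the sign flip $q\to -q$ the same truncation yields a lower bound and the inequality you propose to verify does not imply $\Sigma(G)<|S|_{p'}$. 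The paper avoids this by bounding the tail: writing $z=(1+q^{-1})(1+q^{-3})\exp\bigl((q^3(q^2-1))^{-1}\bigr)$, the estimate $\log\prod_{i\ge 5,\,i\ \mathrm{odd}}(1+q^{-i})\le \sum_{i\ge 5,\,i\ \mathrm{odd}}q^{-i}=(q^3(q^2-1))^{-1}$ gives the genuine upper bound $\Sigma(G)<q^{n(n+1)/2}z$, and it is this $z$ that enters the computer check. You need some such control of the infinite product $\prod_{i\ \mathrm{odd}}(1+q^{-i})$ before the numerical verification means anything; with it, the rest of your argument (the Sylow bound $|P|\le(2(q+1))^{n-1}$, the pentagonal-number estimate for $|S|_{p'}$, and the finite case analysis at small $n,q$) is a faithful, if slightly looser, version of the paper's proof.

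One smaller remark: your lower bound for $|S|_{p'}$ replaces the odd factors $1+q^{-i}$ by $1-q^{-i}$ before applying Lemma~\ref{pentagonal} over all $i\ge 1$, giving $1-q^{-1}-q^{-2}$; the paper instead simply discards those factors (each $>1$) and applies the lemma to $q^2$, giving the sharper $1-q^{-2}-q^{-4}$. Your version is correct but weaker, so the explicit exception set where the computer check fails will be somewhat larger than the paper's, and the one-by-one verification will cover more pairs $(n,q)$.
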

\begin{proof}
From~\cite[Theorem~$5.2$]{TV}, we have
\begin{eqnarray}\label{eqPSU0}
\Sigma(G)=\prod_{i=1}^n\left(q^i+\frac{1-(-1)^i}{2}\right)=\prod_{\substack{1\leq i\leq n\\i\,\mathrm{odd}}}(q^i+1)\prod_{\substack{1\leq i\leq n\\i\,\mathrm{even}}}q^i.
\end{eqnarray}
Moreover, using the order of $G$, we get
\begin{eqnarray*}
|G|_{\ell'}&=&\prod_{i=1}^n(q^{i}-(-1)^i)=\prod_{\substack{1\leq i\leq n\\i\,\mathrm{odd}}}(q^i+1)\prod_{\substack{1\leq i\leq n\\i\,\mathrm{even}}}(q^i-1)\\
&=&\prod_{\substack{1\leq i\leq n\\i\,\mathrm{odd}}}(q^i+1)\prod_{\substack{1\leq i\leq n\\i\,\mathrm{even}}}q^i\prod_{\substack{1\leq i\leq n\\i\,\mathrm{even}}}\left (1-\frac{1}{q^i}\right).
\end{eqnarray*}
So, from~\eqref{eqPSU0} and Lemma~\ref{pentagonal}, we get
$$\frac{|G|_{\ell'}}{\Sigma(G)}=\prod_{\substack{1\leq i\leq n\\i\,\mathrm{even}}}\left (1-\frac{1}{q^i}\right)>1-{q^{-2}}-{q^{-4}}.$$
Now, the first part of the proposition follows immediately.

For the second part of the proposition we first obtain an upper bound on $\Sigma(G)$ which will simplify some of our computations. From~\eqref{eqPSU0} we have
\begin{eqnarray}\label{eqPSU1}
\Sigma(G)&=&q^{\frac{n(n+1)}{2}}\prod_{\substack{1\leq i\leq n\\i\,\mathrm{odd}}}\left(1+\frac{1}{q^i}\right)<q^{\frac{n(n+1)}{2}}\prod_{i\,\mathrm{odd}}\left(1+\frac{1}{q^i}\right).
\end{eqnarray}

Define $z'=\prod_{i\geq 5,i\,\textrm{odd}}(1+q^{-i})$. Using the inequality $\log(1+x)\leq x$, we have
\begin{eqnarray}\label{PSUagain}
\log(z')&=&
\sum_{\substack{i\geq 5\\i\,\mathrm{odd}}}\log\left(1+\frac{1}{q^{i}}\right)\leq
\sum_{\substack{i\geq 5\\i\,\mathrm{odd}}}\frac{1}{q^{i}}=\frac{1}{q}\left(\sum_{j= 0}^\infty\frac{1}{q^{2j}}-1-\frac{1}{q^2}\right)\\\nonumber
&=&\frac{1}{q}\left(\frac{1}{1-1/q^2}-1-\frac{1}{q^2}\right)=\frac{1}{q^3(q^2-1)}.
\end{eqnarray}
Write $z=(1+q^{-1})(1+q^{-3})\exp((q^{3}(q^2-1))^{-1})$. From~\eqref{eqPSU1} and~\eqref{PSUagain}, it follows that
\begin{equation}\label{eqPSU2}
\Sigma(G)<q^{\frac{n(n+1)}{2}}z.
\end{equation}

Let $p$ be a prime with $p\neq \ell$ and let $P$ be a Sylow $p$-subgroup of $S$. Recall that from Lemma~\ref{Sylowp} we have $(n!)_{p}\leq 2^{n-1}$ and hence
$|P|\leq (q+1)^{n-1}(n!)_{p}\leq (q+1)^{n-1}2^{n-1}$ by Lemmas~\ref{ss1} and~\ref{tt2}. From this we get
\begin{eqnarray}\label{eqPSU3}
|S|_{p'}&=&\frac{|S|}{(2(q+1))^{n-1}}=\frac{q^{n^2-1}}{(n,q+1)(2(q+1))^{n-1}}\prod_{i=2}^n\left(1-\frac{1}{(-q)^i}\right)\\\nonumber
&>&\frac{q^{n^2-1}}{(n,q+1)(2(q+1))^{n-1}}\prod_{\substack{i\,\mathrm{even}}}\left(1-\frac{1}{q^i}\right)\\\nonumber
&>&\frac{q^{n^2-1}}{(n,q+1)(2(q+1))^{n-1}}(1-q^{-2}-q^{-4}),
\end{eqnarray}
where in the last inequality we used Lemma~\ref{pentagonal}.

Now an easy computation with the help of a computer shows that the inequality
$$q^{\frac{n(n+1)}{2}}z<\frac{q^{n^2-1}}{(n,q+1)(2(q+1))^{n-1}}(1-q^{-2}-q^{-4})$$
holds true, except for  $n\in \{5,6\}$ and $q=2$, or $n=4$ and $q\leq 7$. In particular, apart this handful of exceptions, we see from~\eqref{eqPSU2} and~\eqref{eqPSU3} that $\Sigma(G)<|S|_{p'}$.

For the remaining values of $n$ and $q$ the lemma follows with a direct inspection on each of the various possibilities.
\end{proof}

\begin{coro}\label{coroPSU}
Let $n\geq 3$ be an integer, let $\ell$ be a prime and let $q$ be a power of $\ell$. Let $G$ be the projective general unitary group $\mathrm{PGU}_n(q)$ and let $S$ be the projective special unitary group $\mathrm{PSU}_n(q)$. If $p$ is a prime with $p\neq \ell$, then either $\Sigma(G)<|S|_{p'}$, or
\begin{description}
\item[$(1)$]$n\in \{3,4\}$, $q=2$ and $p=3$, or
\item[$(2)$]$n=3$, $q=3$ and $p=2$.
\end{description}
\end{coro}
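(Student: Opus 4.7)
My plan is to mirror the argument used in Corollary~\ref{coroPSL} for the linear case, taking Proposition~\ref{propoPSU} as the main input. The starting observation is that irreducible characters of $\mathrm{PGU}_n(q)$ are exactly those irreducible characters of $\mathrm{GU}_n(q)$ whose kernel contains the centre; inflation along $\mathrm{GU}_n(q)\twoheadrightarrow\mathrm{PGU}_n(q)$ preserves the degree, so
$$\Sigma(G)=\Sigma(\mathrm{PGU}_n(q))\leq \Sigma(\mathrm{GU}_n(q)).$$

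For $n\geq 4$, Proposition~\ref{propoPSU} already yields $\Sigma(\mathrm{GU}_n(q))<|S|_{p'}$, except for the pairs $(n,q,p)$ with $n\in\{4,5,6\}$, $q=2$, $p=3$, and $(n,q,p)=(4,3,2)$. For these leftover pairs I would consult the character tables of $\mathrm{PGU}_n(q)$ in~\cite{Atl} (or compute inside \texttt{magma}). The expectation is that for $n=5,6$, $q=2$, $p=3$ the sum $\Sigma(\mathrm{PGU}_n(q))$ is strictly smaller than $\Sigma(\mathrm{GU}_n(q))$, because the kernel of $\mathrm{GU}_n(q)\to \mathrm{PGU}_n(q)$ has order $(n,q+1)$, and this drop should restore the inequality $\Sigma(G)<|S|_{p'}$ in those two cases, so that they vanish from the final list. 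The remaining pairs $(4,2,3)$ and $(4,3,2)$ survive and match items~(1) and~(2) of the corollary.

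For $n=3$, not covered by Proposition~\ref{propoPSU}, I would read off the closed-form value of $\Sigma(\mathrm{PGU}_3(q))$ from the tables in~\cite{Lubeck}. To bound $|P|$ I combine Lemmas~\ref{ss1} and~\ref{tt2} with the Weyl group of type $A_2$ of order $6$, obtaining $|P|\leq 6(q+1)^2$ and hence
$$|S|_{p'}\geq \frac{|S|}{6(q+1)^2}=\frac{q^3(q^2-1)(q^3+1)}{6(3,q+1)(q+1)^2}.$$
Comparing this with the closed-form for $\Sigma(\mathrm{PGU}_3(q))$ reduces to a polynomial inequality in $q$ which holds beyond a small explicit threshold; the finitely many small values of $q$ are then treated one by one, and this inspection should isolate precisely the exceptions $(q,p)=(2,3)$ and $(3,2)$.

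The principal obstacle I foresee is the bookkeeping for small rank and small $q$: in particular, one must verify case by case from~\cite{Atl} that the putative exceptions $n\in\{5,6\}$, $q=2$, $p=3$ inherited from Proposition~\ref{propoPSU} really do drop out on passing from $\mathrm{GU}_n(q)$ to $\mathrm{PGU}_n(q)$. No new theoretical ingredient is required beyond what has already been developed; the work is to perform exactly this finite verification, since it is what separates the exception list of Corollary~\ref{coroPSU} from that of Proposition~\ref{propoPSU}.
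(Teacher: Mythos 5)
Your plan reproduces the paper's proof in structure: reduce $\mathrm{PGU}_n(q)$ to $\mathrm{GU}_n(q)$ via inflation, invoke Proposition~\ref{propoPSU} for $n\geq 4$, dispose of the leftover exceptional pairs by a finite table inspection, and handle $n=3$ separately using the closed form for $\Sigma(\mathrm{PGU}_3(q))$ from~\cite{Lubeck} together with the torus/Weyl-group bound on $|P|$. The conclusion is correct and the reasoning is essentially identical to the paper's.

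Two small points of imprecision, neither fatal. First, the kernel of $\mathrm{GU}_n(q)\twoheadrightarrow\mathrm{PGU}_n(q)$ is the full scalar subgroup, of order $q+1$, not $(n,q+1)$; the latter is $|\mathrm{PGU}_n(q):\mathrm{PSU}_n(q)|$. With your stated reason one would predict no drop in $\Sigma$ for $n=5$, $q=2$ (since $(5,3)=1$), whereas in fact the centre there has order $3$ and the drop is real. Because you intend to check the residual cases from tables anyway, this does not break the argument, but the justification as written points the wrong way. Second, for $n=3$ the paper uses $|P|\leq|T|_p\cdot|W|_p\leq 3(q+1)^2$ from Lemma~\ref{ss1}, since $|\Sym(3)|_p\leq 3$ for every prime $p$; your bound $|P|\leq 6(q+1)^2$ replaces $|W|_p$ by $|W|$ and is still valid, but it pushes the polynomial-inequality threshold in $q$ higher and leaves you a few extra small values of $q$ to check by hand.
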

\begin{proof}
Observe that
$$\Sigma(G)<\Sigma(\mathrm{GU}_n(q)).$$
So, for $n\geq 4$, the proof follows from Proposition~\ref{propoPSU}. In fact, we may assume that either part~(1) or~(2) of Proposition~\ref{propoPSU} holds. Now the proof follows with a case-by-case direct inspection of $\mathrm{PGU}_4(2)$, $\mathrm{PGU}_5(2)$, $\mathrm{PGU}_6(2)$ and $\mathrm{PGU}_4(3)$ with the help of \texttt{magma}.

Assume that $n=3$. Using the information in~\cite{Lubeck} (for instance), we see that $\Sigma(G)=q^2(q^3+1)$.

 Let $p$ be a prime with $p\neq \ell$ and let $P$ be a Sylow $p$-subgroup of $S$. Now, $|S|_{\ell'}=(q-1)(q+1)^2(q^2-q+1)/(3,q+1)$. Moreover, by Lemmas~\ref{ss1} and~\ref{tt2}, we have $|P|\leq 3(q+1)^2$. Thus $|S|_{p'}\geq q^3(q-1)(q^2-q+1)/(3(3,q+1))$. Now, the inequality $$q^2(q^3+1)<\frac{q^3(q-1)(q^2-q+1)}{3(3,q+1)}$$
is satisfied for every $q\geq 9$. In particular, for $q\geq 9$, we have $\Sigma(G)<|S|_{p'}$. Finally, the remaining values of $q$ can be easily checked one-by-one.
\end{proof}

\section{Odd dimensional orthogonal groups}\label{secOrth}
In this section we follow the same pattern as in Sections~\ref{secPSL} and~\ref{secUnitary} and  we study the odd dimensional orthogonal groups.

\begin{propo}\label{propoPOmega}
Let $m\geq 3$ be an integer, let $\ell$ be an odd prime and let $q$ be a power of $\ell$. Let $G$ be the general orthogonal group $\mathrm{GO}_{2m+1}(q)$ and let $S$ be the simple orthogonal group $\Omega_{2m+1}(q)$. We have
$\Sigma(G)<\left((1-q^{-2})(1-{q^{-2}}-{q^{-4}})^2\right)^{-1}|G|_{\ell'}$.
Moreover, if $p$ is a prime with $p\neq \ell$, then $\Sigma(G)<|S|_{p'}$.
\end{propo}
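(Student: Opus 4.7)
The proof follows the same pattern as Propositions~\ref{propoPSL} and~\ref{propoPSU}. My starting point is Vinroot's explicit formula~\cite{V} for $\Sigma(\mathrm{GO}_{2m+1}(q))$ when $\ell$ is odd, together with the identity $|G|_{\ell'}=2q^{m^2}\prod_{i=1}^m(q^{2i}-1)$. For the first inequality I form the ratio $|G|_{\ell'}/\Sigma(G)$; after cancellation it becomes a finite product of factors of the shape $(1-q^{-2i})$, together with one boundary term. I split this finite product as $(1-q^{-2})\cdot A\cdot B$, where $A$ and $B$ are two sub-products indexed over appropriate subsets of $\{1,\ldots,m\}$ arising from Vinroot's formula. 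Each of $A$ and $B$ is bounded below by the infinite product $\prod_{i=1}^\infty(1-q^{-2i})$, which by Lemma~\ref{pentagonal} applied with $q$ replaced by $q^2$ exceeds $1-q^{-2}-q^{-4}$. This gives exactly the stated bound $(1-q^{-2})(1-q^{-2}-q^{-4})^2$.

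For the second inequality I bound a Sylow $p$-subgroup $P$ of $S$ using Lemmas~\ref{ss1}, \ref{tt2}, and~\ref{Sylowp}. The relevant algebraic group has type $B_m$, so its Weyl group has order $|W|=2^m m!$. By Lemma~\ref{Sylowp} we have $(m!)_p\leq 2^{m-1}$, and hence $|W|_p\leq 2^{2m-1}$ for every prime $p\neq\ell$. Combining with Lemmas~\ref{ss1} and~\ref{tt2} gives $|P|\leq 2^{2m-1}(q+1)^m$, whence
\begin{equation*}
|S|_{p'}\geq \frac{|S|}{2^{2m-1}(q+1)^m}=\frac{q^{m^2}}{2^{2m}(q+1)^m}\prod_{i=1}^m(q^{2i}-1),
\end{equation*}
and a further application of Lemma~\ref{pentagonal} simplifies the right-hand side to a closed-form lower bound of order $q^{m^2+m(m+1)}/(2^{2m}(q+1)^m)$.

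From Vinroot's formula one reads off an upper bound $\Sigma(G)<C\cdot q^{m^2+m}$ with $C$ a small explicit constant (bounded as $q\to\infty$). Comparing this with the lower bound on $|S|_{p'}$ reduces $\Sigma(G)<|S|_{p'}$ to an elementary polynomial inequality in $q$ and $m$: the right-hand side exceeds the left by a factor of order $q^{m^2-m}/(2^{2m}(q+1)^m)$, which is comfortably larger than $1$ for every $m\geq 3$ and every odd prime power $q\geq 3$. The main technical obstacle is ensuring that the exponential Weyl-group factor $2^{2m-1}$ is absorbed by the product $\prod_{i=1}^m(q^{2i}-1)$; this is precisely why the statement requires $m\geq 3$, since for $m\in\{1,2\}$ there would not be enough room in the exponent of $q$ to dominate the constants for small $q$. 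Because of this wide asymptotic margin no exceptional pairs $(m,q)$ survive, and the statement is exception-free in contrast to its linear and unitary analogues in Sections~\ref{secPSL} and~\ref{secUnitary}.
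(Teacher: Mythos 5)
Your description of the first inequality does not match what is available, and the argument as written cannot be carried out. You assert that the ratio $|G|_{\ell'}/\Sigma(G)$ ``after cancellation becomes a finite product of factors of the shape $(1-q^{-2i})$'' which you then split as $(1-q^{-2})\cdot A\cdot B$. This presupposes a closed product formula for $\Sigma(\mathrm{GO}_{2m+1}(q))$, analogous to the Klyachko--Gow product for $\mathrm{GL}_n(q)$ in \eqref{eqqodd} and the Thiem--Vinroot product for $\mathrm{GU}_n(q)$ in \eqref{eqPSU0}. No such product form exists here: Vinroot's formula \cite[Theorem 4.1]{V} expresses $\Sigma(\mathrm{GO}_{2m+1}(q))$ as the \emph{sum}
$$\Sigma(G)=2\sum_{k=0}^m q^{2k(m-k+1)}\binom{m}{k}_{q^2}$$
over $q$-binomial coefficients, and already for $m=2$ the ratio $|G|_{\ell'}/\Sigma(G)=(q^2-1)(q^4-1)/(1+q^4+2q^6)$ does not factor into terms $(1-q^{-2i})$. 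The cancellation step you describe is therefore unavailable, and the claimed decomposition into three factors $(1-q^{-2})(1-q^{-2}-q^{-4})^2$ cannot be produced this way. The paper instead has to work harder: it bounds each $q$-binomial coefficient from above by $q^{2k(m-k)}(1-q^{-2}-q^{-4})^{-1}$ using Lemma~\ref{pentagonal}, then observes that the resulting exponents $\{2t(m-t)+t\colon 0\leq t\leq m\}$ are $m+1$ distinct integers with maximum $m(m+1)/2$, so the sum is dominated by a geometric series contributing the extra factor $q^2/(q^2-1)$; it is this geometric-series factor, together with two applications of Lemma~\ref{pentagonal} (one inside the $q$-binomial bound, one on $|G|_{\ell'}$), that accounts for the three factors in the final error term. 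Your split into $A$ and $B$ has no analogue in this argument.

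Your treatment of the second inequality is closer in spirit. The Sylow bound $|P|\leq 2^{2m-1}(q+1)^m$ agrees with the paper, as does the resulting lower bound $|S|_{p'}> q^{m^2+m(m+1)}/(4(q+1))^m\cdot(1-q^{-2}-q^{-4})$. The shape of upper bound you quote, $\Sigma(G)<C\cdot q^{m^2+m}$ with a bounded constant $C$, is correct as well — indeed the paper establishes $\Sigma(G)<2zq^{m(m+1)}q^2/(q^2-1)$ with $z=(1-q^{-2}-q^{-4})^{-1}$ — but you obtain it from the nonexistent product decomposition, so this intermediate bound needs to be re-derived by the sum-and-$q$-binomial analysis sketched above. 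Finally, your explanation of the hypothesis $m\geq 3$ is not quite right: the inequality $\Sigma(G)<|S|_{p'}$ genuinely fails for $m=2$ (where $\Omega_5(q)\cong\mathrm{PSp}_4(q)$ gives exceptional pairs, cf.\ Proposition~\ref{symp}), so the restriction is needed to exclude actual counterexamples, not merely to leave ``room in the exponent.''
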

\begin{proof}
Recall that $|G|=2q^{m^2}\prod_{i=1}^m(q^{2i}-1)$ and that $|S|=|G|/4$.

Given two non-negative integers $m$ and $k$ with $m\geq k$, the $q$-binomial coefficient ${m\choose k}_q$ is defined by
$${m\choose k}_q=\frac{(q^m-1)(q^{m-1}-1)\cdots (q^{m-k+1}-1)}{(q^k-1)(q^{k-1}-1)\cdots (q-1)}.$$
 From~\cite[Theorem~$4.1$]{V}, we see that
\begin{equation}\label{eqOodd}
\Sigma(G)=2\sum_{k=0}^mq^{2k(m-k+1)}{m\choose k}_{q^2}.
\end{equation}
Using this exact formula for $\Sigma(G)$ we now extract a rather sharp upper bound that will be useful for our proof. First,
\begin{eqnarray*}
{m\choose k}_{q^2}&\leq &\frac{q^{2m}\cdot q^{2(m-1)}\cdot\cdots \cdot q^{2(m-k+1)}}{(q^{2k}(1-q^{-2k}))\cdot(q^{2(k-1)}(1-q^{-2(k-1)}))\cdot\cdots\cdot (q^2(1-q^{-2}))}\\
&=&\frac{q^{2mk-(k-1)k}}{q^{k(k+1)}(1-q^{-2k})\cdots (1-q^{-2})}<\frac{q^{2mk-2k^2}}{\prod_{i=1}^{\infty}(1-q^{-2i})}\\
&<&q^{2k(m-k)}(1-q^{-2}-q^{-4})^{-1},
\end{eqnarray*}
where the last inequality follows from Lemma~\ref{pentagonal}. Write $z=(1-q^{-2}-q^{-4})^{-1}$. Combining this upper bound for ${m\choose k}_{q^2}$ with~\eqref{eqOodd}, we get
\begin{equation}\label{eqOodd1}
\Sigma(G)<2z\sum_{k=0}^mq^{4k(m-k)+2k}.
\end{equation}

Write $\{2t(m-t)+t\mid 0\leq t\leq m\}=\{x_0,x_1,\ldots,x_{m}\}$ with $x_0\leq x_1\leq \cdots \leq x_m$. It is easy to verify that the set $\{2t(m-t)+t\mid 0\leq t\leq m\}$ consists of exactly $m+1$ distinct non-negative integers, having minimum $0$ (achieved by taking $t=0$) and having maximum $m(m+1)/2$ (achieved by taking $t=\lfloor (m+1)/2\rfloor$). Therefore $x_0<x_1<\cdots<x_m=m(m+1)/2$ and hence $x_{m-t}\leq m(m+1)/2-t$, for each $t\in \{0,\ldots,m\}$. From this, it follows by~\eqref{eqOodd1} that
\begin{eqnarray}\label{eqOodd2}
\Sigma(G)&<&2z\sum_{t=0}^mq^{m(m+1)-2t}=2z{q^{m(m+1)}}\sum_{t=0}^mq^{-2t}\\\nonumber
&<&2zq^{m(m+1)}\sum_{t=0}^\infty q^{-2t}=2zq^{m(m+1)}\frac{q^2}{q^2-1}.
\end{eqnarray}
Moreover,
\begin{eqnarray*}
|G|_{\ell'}&=&2(q^2-1)\cdots (q^{2m}-1)=2q^{m(m+1)}(1-q^{-2})\cdots (1-q^{-2m})\\
&>&2q^{m(m+1)}\prod_{i=1}^{\infty}(1-q^{-2i})>2q^{m(m+1)}(1-q^{-2}-q^{-4})=2q^{m(m+1)}z^{-1}.
\end{eqnarray*}
Now, the first part of the proposition follows combining the above equation with~\eqref{eqOodd2}.

Let $p$ be a prime with $p\neq \ell$ and let $P$ be a Sylow $p$-subgroup of $S$. Now, the Weyl group $W$ of $G$ is the semidirect product of $\Sym(m)$ with the natural $\Sym(m)$-permutation module over the field of size $2$, that is, $|W|=2^{m}{m}!$. Hence, by Lemmas~\ref{Sylowp},~\ref{ss1} and~\ref{tt2}, we obtain
$|P|\leq (q+1)^m|W|_p\leq (q+1)^m2^{m}2^{m-1}=(q+1)^m2^{2m-1}$. From this and from Lemma~\ref{pentagonal} applied with $q$ replaced by $q^2$, it follows that
\begin{eqnarray}\label{eqOodd3}
|S|_{p'}&\geq &\frac{|G|/4}{(q+1)^m2^{2m-1}}=\frac{q^{m^2+m(m+1)}}{(q+1)^m2^{2m}}\prod_{k=1}^{m}(1-q^{-2k})\\\nonumber
&>&\frac{q^{m^2+m(m+1)}}{(4(q+1))^m}z^{-1}.
\end{eqnarray}

It is a computation with the usual help of a computer to see that the inequality
$$2zq^{m(m+1)}\frac{q^2}{q^2-1}<\frac{q^{m^2+m(m+1)}}{(4(q+1))^m}z^{-1}$$
is always satisfied. In particular, the proof follows from~\eqref{eqOodd2} and~\eqref{eqOodd3}.
\end{proof}

It is rather interesting to note that the error factor in front of $|G|_{\ell'}$ in Proposition~\ref{propoPOmega} is a decreasing function of $q$ which tends to $1$ as $q$ tends to infinity and having maximum (at $q=3$) roughly equal to $1.4642$.

\section{A uniform bound}\label{newbound}

In this section ${\mathbf G}$ is a simple classical group of rank $r$ and $W$ is its Weyl group.
Note that the Weyl group of the dual group ${\mathbf G}^*$ is isomorphic to $W$, and hence 
we use $W$ to denote the Weyl group of both ${\mathbf G}$ and ${\mathbf G}^*$. Observe also that ${\mathbf G}^*$ is a classical algebraic group.
Let $F$ be a Frobenius endomorphism of ${\mathbf G}$ and  $G={\mathbf G}^{F}$. 

\begin{lemma}\label{shalev}
Suppose that $G$ is a classical group. Let $v$ be the number of unipotent characters of $G$.
Then  $v\leq |W|$. Moreover, either $v\leq |W|^{1/\sqrt{r}}$, or  ${\mathbf G}$ is of type $B_r$ or $C_r$ with $2\leq r\leq 6$, or ${\mathbf G}$ is of type  $D_4$.
\end{lemma}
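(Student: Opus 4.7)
The plan is to invoke Lusztig's classification of unipotent characters of classical groups by symbols. In type $A_r$ (linear or unitary), the unipotent characters are in bijection with partitions of $r+1$, so $v=p(r+1)$. In types $B_r$, $C_r$, $D_r$ and $^{2}D_r$, the count $v$ equals the number of symbols of rank $r$ whose defect lies in a fixed residue class modulo $4$. A standard enumeration bounds the number of symbols of rank $r$ and defect $d$ by the number of pairs of partitions of an integer not exceeding $r$, and the rank condition forces $d=O(\sqrt{r})$; summing over defects yields a bound of the shape
$$v\;\le\;C\cdot r^{3/2}\cdot p(r)^{2}$$
for an absolute constant $C$. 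Combined with the Hardy--Ramanujan estimate $p(n)\le \exp(\pi\sqrt{2n/3})$, this gives
$$\log v\;\le\;2\pi\sqrt{\tfrac{2r}{3}}+\tfrac{3}{2}\log r+O(1).$$

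On the Weyl-group side, $|W|=(r+1)!$ in type $A$ and $|W|\ge 2^{r-1}r!$ otherwise, so in every case Stirling gives $\log|W|\ge r\log r-r+O(1)$. The first assertion $v\le |W|$ is therefore immediate once $r$ exceeds a small explicit threshold, and for the remaining finite list of small ranks it is verified by direct inspection of the tabulated count of unipotent characters (available in Carter's book).

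For the sharper inequality $v\le |W|^{1/\sqrt{r}}$, after taking logarithms the task reduces to
$$2\pi\sqrt{\tfrac{2r}{3}}+O(\log r)\;\le\;\sqrt{r}\log r-\sqrt{r}+O\bigl(r^{-1/2}\log r\bigr),$$
which after dividing by $\sqrt{r}$ becomes $\log r\ge 2\pi\sqrt{2/3}+o(1)\approx 5.13+o(1)$, and so holds once $r$ exceeds an explicit absolute threshold (roughly $r\ge 40$ suffices with ample slack). The remaining small-rank regime is then a finite check, run type by type directly from the explicit symbol counts: this is where the genuine exceptions $B_r,C_r$ with $2\le r\le 6$ and $D_4$ emerge, while all other small cases (in particular every type $A_r$, every $^{2}D_r$, and types $B_r,C_r$ with $r\ge 7$ and $D_r$ with $r\ge 5$) must be checked to satisfy the sharper bound. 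The main obstacle is precisely this small-rank bookkeeping: the asymptotic comparison is clean, but pinning down the exact boundary and confirming that no further exception has been missed requires a careful case-by-case tabulation of the number of symbols of rank $r$ with the prescribed defect condition against $|W|^{1/\sqrt{r}}$.
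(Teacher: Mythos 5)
Your plan follows the same blueprint as the paper's proof: use the parametrization of unipotent characters by partitions and symbols, bound the resulting count via a Hardy--Ramanujan-type estimate for the partition function, lower-bound $|W|$ via Stirling, compare asymptotically, and clean up small ranks by a finite check. However, there are two genuine gaps that keep this from being a proof.

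First, the intermediate bound $v\le C\,r^{3/2}\,p(r)^2$ is too lossy to support your numerical claim. The dominant contribution (defect giving $s=0$) is $\sum_{a=0}^{r}p(a)\,p(r-a)$, and the correct way to bound each summand is by the \emph{balanced} split $p(\lfloor r/2\rfloor)^2\sim e^{2\pi\sqrt{r/3}}$, not by $p(r)^2\sim e^{2\pi\sqrt{2r/3}}$; the latter is larger by a factor of $\sqrt{2}$ in the exponent. The paper uses Pribitkin's refinement $p(n)<e^{\pi\sqrt{2n/3}}/n^{3/4}$ precisely so that the polynomial prefactors cancel and it can reach practical thresholds ($r\ge 17$ in type $A$, $r\ge 44$ in types $B,C$, $r\ge 55$ in type $^2D$). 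By contrast, your own estimate, carried out correctly, requires $\log r\ge 1+2\pi\sqrt{2/3}\approx 6.13$ (you dropped the $-1$ coming from the $-\sqrt{r}$ term, and also the $+\log 2$ contributed by $2^{r-1}$ in $|W|$), that is $r\gtrsim 460$, nowhere near the asserted ``$r\ge 40$ with ample slack.''

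Second, and decisively, the finite check is not a throwaway remark: under the paper's sharper bounds it still runs out to roughly $r=55$ for $^2D$ and $r=43$ for $B,C$, and this is exactly where the exceptional types $B_r,C_r$ with $2\le r\le 6$ and $D_4$ are separated out and where one verifies $v\le|W|$ on those exceptions. You name the exceptions but do not compute the explicit defect sums \eqref{BlClexact}--\eqref{Dlexact} (or their analogues) and compare them with $|W|^{1/\sqrt{r}}$. As written, the proposal identifies the correct framework but establishes neither the quantitative threshold nor the small-rank verification, both of which are essential for the lemma.
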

\begin{proof}
Given a non-negative integer $r$, we denote by $p(r)$ the number of partitions of $r$, and given a partition $\alpha=(\alpha_1,\ldots,\alpha_t)$, we write $|\alpha|=\sum_{i=1}^t\alpha_i$. From~\cite[page~$114$]{Pribitkin}, we see that, for $r\geq 1$, we have $$p(r)<\frac{e^{\pi\sqrt{2r/3}}}{r^{3/4}}.$$
We denote by $p^*(r)$ the function on the right-hand side of this inequality (where we also define $p^*(0)=1$). Now, it is an easy computation to see that $p^*(r)$ is an increasing function of $r$ and that the maximum of $\{p^*(a)p^*(r-a)\mid 0\leq a\leq r\}$ is achieved for $a=\lfloor r/2\rfloor $ with value bounded above by $(p^*(r/2))^2=e^{\pi\sqrt{4r/3}}/(r/2)^{3/2}$.

Stirling's formula~\cite{robbins} gives us that for every $r\geq 1$, $$r!\geq \sqrt{2\pi r}(r/e)^r.$$

Assume that $G$ is of Lie type $A_r$ (with $r \geq 1$) or $^{2}A_r$ (with $r\geq 2$). From~\cite[page~$465$]{Ca}, we see that the unipotent characters of $G$ are parametrized by  the partitions $\alpha$ of $r+1$. Hence
$v=p(r+1)\leq p^*(r+1)$.
Recall that $|W|=(r+1)!$. So, from Stirling's formula, we get $|W|=(r+1)r!\geq (r+1)\sqrt{2\pi r}(r/e)^r$ and hence $|W|^{1/\sqrt{r}}\geq ((r+1)\sqrt{2\pi r})^{1/\sqrt{r}}(r/e)^{\sqrt{r}}$. With the support of a computer we see that
$$p^*(r+1)\leq ((r+1)\sqrt{2\pi r})^{1/\sqrt{r}}(r/e)^{\sqrt{r}}$$
for every $r\geq 17$. Now, for $1\leq r\leq 16$, using the exact value of $|W|$ and of $v$ we see that $v\leq |W|^{1/\sqrt{r}}$.

Assume that $G$ is of Lie type $B_r$ (with $r\geq 2$) or $C_r$ (with $r \geq 3$). From~\cite[page~$467$]{Ca}, we see that the unipotent characters of $G$ are parametrized by ordered pairs of partitions $\alpha$ and $\beta$ with $|\alpha|+|\beta|=r-(s+s^2)$, where  $s$ runs through the set of non-negative integers with $s+s^2\leq r$. Hence
\begin{equation}\label{BlClexact}
v=\sum_{\substack{s\geq 0\\s+s^2\leq r}}\sum_{a=0}^{r-(s+s^2)}p(a)p(r-(s+s^2)-a).
\end{equation}
It follows that
\begin{eqnarray*}
v&\leq& \sum_{\substack{s\geq 0\\s+s^2\leq r}}\sum_{a=0}^{r-(s+s^2)}p^*(a)p^*(r-(s+s^2)-a)\leq\sum_{\substack{s\geq 0\\s+s^2\leq r}}\sum_{a=0}^{r-(s+s^2)}p^*(a)p^*(r-a)\\
&\leq& \sum_{\substack{s\geq 0\\s+s^2\leq r}}\sum_{a=0}^{r-(s+s^2)}\frac{e^{\pi\sqrt{4r/3}}}{(r/2)^{3/2}}\leq \sum_{\substack{s\geq 0\\s+s^2\leq r}}(r+1)\frac{e^{\pi\sqrt{4r/3}}}{(r/2)^{3/2}}\\
&\leq&(\sqrt{r}+1)(r+1)\frac{e^{\pi\sqrt{4r/3}}}{(r/2)^{3/2}}.
\end{eqnarray*}
Recall that $|W|=2^rr!$. So, from Stirling's formula, we get $|W|\geq 2^r \sqrt{2\pi r}(r/e)^r=\sqrt{2\pi r}(2r/e)^r$ and hence $|W|^{1/\sqrt{r}}\geq
(2\pi r)^{1/(2\sqrt{r})}(2r/e)^{\sqrt{r}}$. With the support of a computer we see that
$$(\sqrt{r}+1)(r+1)\frac{e^{\pi\sqrt{4r/3}}}{(r/2)^{3/2}}\leq (2\pi r)^{1/(2\sqrt{r})}(2r/e)^{\sqrt{r}},$$
for every $r\geq 44$. Now, for $7\leq r\leq 43$, using the exact value of $|W|$ and of $v$ (obtained with~\eqref{BlClexact}) we see that $v\leq |W|^{1/\sqrt{r}}$. Similarly, for $2\leq r\leq 6$, with another direct computation we see that $v\leq |W|$.

Assume that $G$ is of Lie type $^{2}D_r$ (with $r\geq 4$). From~\cite[page~$476$]{Ca}, we see that the unipotent characters of $G$ are parametrized by ordered pairs of partitions $\alpha$ and $\beta$ with $|\alpha|+|\beta|=r-s^2$, where $s$ runs through the set of odd positive integers with $s^2\leq r$. Hence
\begin{equation}\label{2Dlexact}
v=\sum_{\substack{s\,\textrm{odd}\\s^2\leq r}}\sum_{a=0}^{r-s^2}p(a)p(r-s^2-a).
\end{equation}
It follows that
\begin{eqnarray*}
v&\leq& \sum_{\substack{s\,\textrm{odd}\\s^2\leq r}}\sum_{a=0}^{r-s^2}p^*(a)p^*(r-s^2-a)\leq\sum_{\substack{s\,\textrm{odd}\\s^2\leq r}}\sum_{a=0}^{r-s^2}p^*(a)p^*(r-1-a)\\
&\leq& \sum_{\substack{s\,\textrm{odd}\\s^2\leq r}}\sum_{a=0}^{r-1}\frac{e^{\pi\sqrt{4(r-1)/3}}}{((r-1)/2)^{3/2}}\leq \sum_{\substack{s\,\textrm{odd}\\s^2\leq r}}r\cdot\frac{e^{\pi\sqrt{4(r-1)/3}}}{((r-1)/2)^{3/2}}\\
&\leq&\sqrt{r}\cdot r\cdot\frac{e^{\pi\sqrt{4(r-1)/3}}}{((r-1)/2)^{3/2}}=r^{3/2}\frac{e^{\pi\sqrt{4(r-1)/3}}}{((r-1)/2)^{3/2}}.
\end{eqnarray*}
Recall that $|W|=2^{r-1}r!$. So, from Stirling's formula, we get $|W|\geq 2^{r-1} \sqrt{2\pi r}(r/e)^r=\sqrt{\pi r/2}(2r/e)^r$ and hence $|W|^{1/\sqrt{r}}\geq (\pi r/2)^{1/(2\sqrt{r})}(2r/e)^{\sqrt{r}}$. With the support of a computer we see that
$$r^{3/2}\frac{e^{\pi\sqrt{4(r-1)/3}}}{((r-1)/2)^{3/2}}<
(\pi r/2)^{1/(2\sqrt{r})}(2 r/e)^{\sqrt{r}},$$
for every $r\geq 55$. Now, for $4\leq r\leq 54$, using the exact value of $|W|$ and of $v$ (obtained with~\eqref{2Dlexact}) we see that $v<|W|^{1/\sqrt{r}}$.

Finally, assume that $G$ is of Lie type $D_r$ (with $r\geq 4$). From~\cite[page~$472$]{Ca}, we see that the unipotent characters of $G$ of defect  $>0$ are parametrized by ordered pairs of partitions $\alpha$ and $\beta$ with $|\alpha|+|\beta|=r-s^2$, where $s$ runs through the set of even positive integers with $s^2\leq r$. Moreover, the unipotent characters of $G$ of defect $0$ are in one-to-one correspondence with the irreducible characters of $W$. The latter are described in~\cite[Proposition~$11.4.3$]{Ca}. It follows that
\begin{equation}\label{Dlexact}
v=\sum_{\substack{s\,\textrm{even}\\4\leq s^2\leq r}}\sum_{a=0}^{r-s^2}p(a)p(r-s^2-a)+\varepsilon,
\end{equation}
with
\[
\varepsilon=\left\{
\begin{array}{lcl}
\frac{1}{2}\sum_{a=0}^{r}p(a)p(r-a)&&\textrm{if }r \textrm{ odd},\\
\frac{1}{2}\sum_{a=0}^{r}p(a)p(r-a)+\frac{3}{2}p(r/2)&&\textrm{if }r \textrm{ even}.\\
\end{array}
\right.
\]
Now, arguing as above, it is a computer computation to show that $v\leq |W|^{1/\sqrt{r}}$ for $r\neq 4$. Finally, for $r=4$, we have $|W|=192>14=v$.
\end{proof}

Now suppose that ${\mathbf G}$ has connected centre. Denote by $G^*$ the dual group of $G$. Note that $\Irr (G)$
is the disjoint union of the so called Lusztig (geometric) series
${\mathcal E}_s$, where  $s$ runs through a set of representatives
for the conjugacy classes of semisimple elements  of $G^*$,
see~\cite[13.16]{DM}. The fact that $s$ runs over representatives
of the semisimple conjugacy classes in $G^*$ follows from the
observation that, for groups with connected centre,  the geometric
conjugacy class of $s$ in $G^*$ coincides with the ordinary conjugacy
class~\cite[page~136]{DM}.

\begin{theo}\label{e(s)} Let $G$ be a classical group of adjoint type,
let $s\in G^*$ be a semisimple element, and let $W$ be the Weyl
group of ${\mathbf G}$. Then $|{\mathcal E}_s|\leq |W|$.
\end{theo}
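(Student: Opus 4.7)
The plan is to reduce the bound on $|\mathcal{E}_s|$ to a bound on the number of unipotent characters of the centraliser of $s$, and then invoke Lemma~\ref{shalev}. Since $\mathbf{G}$ is of adjoint type, its centre is trivial (in particular connected) and the dual group $\mathbf{G}^*$ is simply connected; by Steinberg's connectedness theorem the centraliser $\mathbf{C}:=C_{\mathbf{G}^*}(s)$ of the semisimple element $s$ is therefore connected reductive. In this setting Jordan decomposition of characters (see~\cite{DM}) provides a bijection between $\mathcal{E}_s$ and the set of unipotent characters of the finite reductive group $\mathbf{C}^F$, so it suffices to bound the number of the latter by $|W|$.

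I would then decompose $\mathbf{C}=\mathbf{Z}\cdot\mathbf{H}_1\cdots\mathbf{H}_k$, where $\mathbf{Z}$ is a central torus and the $\mathbf{H}_i$ are the quasi-simple factors of $[\mathbf{C},\mathbf{C}]$. The torus $\mathbf{Z}$ contributes only the trivial unipotent character, while $F$ permutes the $\mathbf{H}_i$ among themselves. For each $F$-orbit $\{\mathbf{H}_{i_1},\dots,\mathbf{H}_{i_m}\}$, the fixed-point subgroup $(\mathbf{H}_{i_1}\cdots\mathbf{H}_{i_m})^F$ is isomorphic to a single finite group of Lie type $\mathbf{H}_{i_1}^{F^m}$, and the unipotent characters of $\mathbf{C}^F$ split as outer tensor products across these orbits. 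Since $\mathbf{G}$ is classical, the root subsystem cut out by the condition $\alpha(s)=1$ is a union of subsystems of classical type, so every $\mathbf{H}_i$ is itself a classical algebraic group and each $\mathbf{H}_{i_1}^{F^m}$ is a classical finite group of Lie type. Lemma~\ref{shalev} therefore bounds the number of its unipotent characters by $|W_{\mathbf{H}_{i_1}}|$.

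Multiplying over the $F$-orbits and bounding the result crudely by the product over \emph{all} factors gives
\[
|\mathcal{E}_s|\leq\prod_{j}|W_{\mathbf{H}_{i_j}}|\leq\prod_{i=1}^k|W_{\mathbf{H}_i}|=|W_{\mathbf{C}}|,
\]
where $W_{\mathbf{C}}$ is computed with respect to an $F$-stable maximal torus $\mathbf{T}\subset\mathbf{C}\subset\mathbf{G}^*$. As $\mathbf{T}$ is also a maximal torus of $\mathbf{G}^*$, the group $W_{\mathbf{C}}$ is naturally the reflection subgroup of $W$ generated by the reflections in those roots that vanish on $s$, so $|W_{\mathbf{C}}|\leq|W|$ and the theorem follows.

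The step I expect to require the most care is purely structural bookkeeping: one must verify that each factor $\mathbf{H}_{i_1}^{F^m}$ actually falls within the scope of Lemma~\ref{shalev}, i.e.\ that the Frobenius induced by $F^m$ on the classical algebraic group $\mathbf{H}_{i_1}$ is of one of the standard shapes $A_r$, $^2A_r$, $B_r$, $C_r$, $D_r$ or $^2D_r$. Since the simple factors of a centraliser of a semisimple element in a classical group are classical (no triality-type phenomenon can occur) and the twist coming from an $F$-orbit of length $m$ on the factors is a standard sign twist, this should go through cleanly, but it is the one place where the general structure of $\mathbf{C}$ and of the Frobenius action on it must be used in an essential way.
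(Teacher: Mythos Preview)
Your proposal is correct and follows essentially the same route as the paper: connectedness of $C_{\mathbf{G}^*}(s)$ via Steinberg, the bijection with unipotent characters of the centraliser, decomposition into simple (classical) factors permuted by $F$, application of Lemma~\ref{shalev} to each $F$-orbit, and the final inequality $|W_{\mathbf{C}}|\leq |W|$. The structural caveat you flag---that each $(\mathbf{H}_{i_1})^{F^m}$ is genuinely of one of the types covered by Lemma~\ref{shalev} (in particular, that no triality twist on a $D_4$ factor can occur)---is exactly the point the paper passes over with the sentence ``It is known that $\mathbf{L}_1,\ldots,\mathbf{L}_k$ are of classical type,'' so your identification of where care is needed is accurate.
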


\begin{proof} Observe that $G^*$ is of simply connected type, and hence
$C_{{\mathbf G}^*}(s)$ is connected. Let ${\mathbf L}$ be the
semisimple component of $C_{{\mathbf G}^*}(s)$. As explained in~\cite[Lemma 3.4]{LS}, the number of unipotent characters of
$C_{ G^*}(s)$ equals that of ${\mathbf L}^{F}$, and coincides
with the number of characters in ${\mathcal E}_s$. 

Let ${\mathbf L}={\mathbf L}_1\circ \cdots \circ {\mathbf L}_k$ be the
decomposition of ${\mathbf L}$ as the central product of simple
components ${\mathbf L}_1,\ldots ,{\mathbf L}_k$.  It is known
that ${\bf L}_1,\ldots,{\bf L}_k$ are of classical type. Clearly, the sum of their ranks
 does not exceed the rank of ${\mathbf L}$, as well as the rank
of ${\mathbf G}$. Furthermore, the Frobenius endomorphism $F$ acts on the set $\{{\bf L}_1,\ldots,{\bf L}_k\}$ by permuting the
components. Let $m$ be the number of orbits of $F$ under this action. This implies that ${\mathbf L}^{F}$ is a central product of finite classical
groups $G_j$ ($j=1,\ldots ,m$), where    $G_j=({\mathbf
L}_{i_j})^{F^{a_{i_j}}}$ for some $i_j\in\{1,\ldots ,k\}$ and $a_{i_j}$ is the
size of the $F$-orbit on ${\mathbf L}_{i_j}$. 

By Lemma \ref{shalev},
the number of unipotent characters of $G_j$ does not exceed
$|W({\mathbf L}_{i_j})|$, where $W({\mathbf L}_{i_j})$ is the Weyl group
of ${\mathbf L}_{i_j}$. Therefore, the number of unipotent
characters of $C_{G^*}(s)$ does not exceed $\prod_{i=1}^k|W({\bf L}_i)|=|W({\mathbf L})|\leq |W|$. It follows that $|{\mathcal E}_s|\leq |W|$.
\end{proof}

\begin{propo}\label{np1}
Suppose $ G$ is a classical group
of adjoint isogeny type. Then $\Sigma(G)<
|G|_{\ell'} \cdot |W|$.
\end{propo}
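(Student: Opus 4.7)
The plan is to partition $\Irr(G)$ into Lusztig geometric series, bound the contribution of each series via Theorem~\ref{e(s)} together with the dominance of the Steinberg unipotent character, and then sum using a global counting identity. Since $\mathbf{G}$ has adjoint isogeny type, the dual $\mathbf{G}^*$ is of simply connected type, so the centralizer $C_{\mathbf{G}^*}(s)$ of every semisimple $s\in \mathbf{G}^*$ is a connected reductive subgroup. By Lusztig's Jordan decomposition of characters, for each representative $s$ of a semisimple conjugacy class of $G^*$ there is a bijection $\mathcal{E}_s\leftrightarrow \mathcal{E}(C_s,1)$, where $C_s:=C_{\mathbf{G}^*}(s)^F$, together with the degree formula $\chi(1)=[G^*:C_s]_{\ell'}\,\psi(1)$. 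Summing over all $s$ yields
\[
\Sigma(G)=\sum_{s}[G^*:C_s]_{\ell'}\,\Sigma_u(C_s),
\]
where $\Sigma_u(C_s)$ denotes the sum of the unipotent character degrees of $C_s$ and $s$ runs over representatives of the semisimple conjugacy classes of $G^*$.

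The next step is to bound $\Sigma_u(C_s)$ for each $s$. Theorem~\ref{e(s)} gives $|\mathcal{E}_s|\le |W|$, hence $C_s$ has at most $|W|$ unipotent characters. Moreover, in any finite reductive group $H$ the Steinberg character has degree exactly $|H|_\ell$ and is of maximal degree among the unipotent characters; applied to the simple classical factors of the central product decomposition of $C_s$ this gives the uniform bound $\psi(1)\le |C_s|_\ell$ for every $\psi\in \mathcal{E}(C_s,1)$. Combining these two facts, $\Sigma_u(C_s)\le |W|\,|C_s|_\ell$, and since $[G^*:C_s]_{\ell'}=|G|_{\ell'}/|C_s|_{\ell'}$, the previous display becomes
\[
\Sigma(G)\;\le\;|W|\,|G|_{\ell'}\sum_{s}\frac{|C_s|_\ell}{|C_s|_{\ell'}}.
\]

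To conclude, it remains to establish the identity $\sum_{s}|C_s|_\ell/|C_s|_{\ell'}=1$. This follows from Steinberg's theorem on the number of unipotent elements of a connected reductive group: for any connected reductive $\mathbf{H}$, the number of unipotent elements of $\mathbf{H}^F$ equals $|\mathbf{H}^F|_\ell^{\,2}$. Indeed, every $x\in G^*$ admits a unique Jordan factorization $x=x_s x_u$ with $x_s$ semisimple and $x_u$ a unipotent element of $C_{G^*}(x_s)$, so grouping by the conjugacy class of $x_s$ gives
\[
|G^*|\;=\;\sum_{s}[G^*:C_s]\cdot |C_s|_\ell^{\,2},
\]
and dividing by $|G^*|=|G|$ yields the claimed identity. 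Combined with the previous step this proves $\Sigma(G)\le |W|\,|G|_{\ell'}$, and the inequality is strict because the bound $\psi(1)\le |C_s|_\ell$ is strict for many $\psi$ (already for the trivial unipotent character of $C_s$, which has degree $1$). The principal technical obstacle is precisely the uniform bound $\psi(1)\le |C_s|_\ell$: for each simple classical factor of the central product decomposition of $C_s$ one must invoke the explicit generic degree formulae of Lusztig's theory to confirm that Steinberg has the largest degree among the unipotent characters.
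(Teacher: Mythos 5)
Your proposal is correct and follows essentially the same route as the paper: partition $\Irr(G)$ into Lusztig series, bound each series via the Jordan decomposition degree formula and Theorem~\ref{e(s)}, and sum. Two points are worth comparing. First, for the closing identity you derive $\sum_s [G^*:C_s]_{\ell'}\,|C_s|_\ell = |G|_{\ell'}$ directly from Steinberg's count of unipotent elements and the Jordan decomposition of $G^*$; the paper instead recognizes this quantity as $\sum_s \rho_s(1)$, the degree of the Gelfand--Graev character, and cites \cite[14.29, 14.47(ii)]{DM}. These are two proofs of the same numerical fact, and yours is a pleasant elementary alternative. Second, for the uniform bound $\psi(1)\leq |C_s|_\ell$ on unipotent character degrees you assert it as maximality of the Steinberg character and rightly flag it as the technical obstacle; the paper short-circuits this by citing \cite[Theorem~1.2]{LMT}, which establishes exactly this inequality, so this step is available off the shelf rather than requiring a case analysis of generic degrees. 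One small caveat on your strictness argument: the inequality $\psi(1)\leq |C_s|_\ell$ is not strict for any $\psi$ when $C_s$ is a maximal torus (both sides are $1$), so you should anchor the strictness at $s=1$, where $C_1=G^*$ is not a torus and the trivial unipotent character has degree $1<|G^*|_\ell$ --- which is precisely what the paper does.
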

\begin{proof}Let $G={\mathbf G}^{F}$, where   ${\mathbf G} $
is the algebraic group of adjoint isogeny type. Then the centre of
 ${\mathbf G}$ is trivial, and hence connected. Recall that $G^*$ is the dual group of $G$ and that $\Irr(G)=\cup_s{\mathcal E}_s$, where $s$ runs over representatives
of the semisimple conjugacy classes in $G^*$ (because the centre of ${\bf G}$ is connected). 

Every series ${\mathcal E}_s$ contains exactly one regular
character $\rho_s$ and exactly one semisimple character $\sigma_s$
\cite[14.47]{DM}. For each character $\chi\in {\mathcal E}_s$, the
degree $\chi(1)$ is of the form $\sigma_s(1)\cdot \nu_\chi(1)$,
where $\nu_\chi$ is a  unipotent character of $C_{G^*} (s) $
\cite[13.24]{DM}. It is shown in \cite[Theorem 1.2]{LMT} that
$\nu(1)\leq |C_{G^*}(s)|_{\ell}$, for every unipotent character
of $C_{{G^*}}(s)$. It also well-known that
$\rho_s(1)=\sigma_s(1)\cdot |C_{G^*}(s)|_{\ell}$. Thus
$\chi(1)\leq \rho_s(1)$, for every $\chi\in{\mathcal E}_s$.

Recall that the correspondence
$\chi\rightarrow \nu_\chi$ is bijective~\cite[13.23]{DM}. Therefore,
$|{\mathcal E}_s|$ equals the number 
 of unipotent characters of $C_{{G^*}}(s)$.
Therefore,
\begin{equation}\label{eqkey1}
\sum_{\chi\in\Irr (G)}\chi(1) \leq \sum_{s}|{\mathcal E}_s|\cdot
\rho_s(1),
\end{equation} where $s$  runs over a set of
representatives of the semisimple conjugacy classes in $G^*$. In fact, the inequality in~\eqref{eqkey1} is actually strict because for $s=1$ the Lusztig series $\mathcal{E}_1$ contains the trivial character of $G$ and the character of degree $|G|_\ell>1$.
By Proposition \ref{e(s)}, for every $s$ we have $|{\mathcal E}_s|\leq |W|$.
So, by Lemma~\ref{shalev} and ~\eqref{eqkey1}, we get
$$\Sigma(G)=\sum_{\chi\in \Irr(G)}\chi(1)<
\left(\sum_{s}\rho_s(1)\right)\cdot |W|=|G|_{\ell'}\cdot |W|,$$
 where the
last equality follows from~\cite[14.29 and 14.47~(ii)]{DM}.
\end{proof}

\section{The remaining classical groups and the
proof of Theorem~\ref{main1}}\label{andtherest}

In this section we consider the remaining classical groups, that
is, symplectic and even dimensional orthogonal groups. The main ingredient
in this section is Proposition~\ref{np1}. Potentially, we could have used Proposition~\ref{np1} for other (classical) groups. However, the upper bound offered in Proposition~\ref{np1} is rather crude when the rank is small, and hence it would leave us with a  long list of small cases. 

For symplectic and orthogonal groups with field parameter $q$ even we don't have an exact formula for
the character degree sum. So we content ourselves to just prove
the inequality relevant to our investigation. Nevertheless, we deal
uniformly with even and odd characteristic.

We first study the even dimensional orthogonal groups.
Proposition~\ref{np1}
 must be applied to ${\mathbf G}^{F}$, where ${\bf G}$ is of adjoint type. The centre of ${\bf G}^F$ is trivial, and ${\bf G}^F$ has a normal subgroup $S$, which is a
simple group. Specifically, $S\cong
\mathrm{P}\Omega_{2r}^\pm(q)$, where $r$ is the rank of ${\mathbf G}$. Moreover, when $q$
is even, $\Omega_{2r}^\pm(q)$ is centreless, and hence $S\cong
\Omega_{2r}^\pm(q)$.
 When $q$ is odd, $|{\bf G}^F|$
coincides with the order of $\mathrm{SO}^\pm_{2r}(q)$ (see~\cite{Ca}) and hence $|\mathrm{P}\Omega_{2r}^{\pm }(q)|=|\mathrm{SO}^\pm_{2r}(q)|/(4,q^r\mp 1)$  by~\cite[Tables 2.1.C and 5.1.A]{KL}.

\begin{propo}\label{ortheven}
Let $r\geq 4$ be an integer, let $\ell$ be a prime and let $q$ be
a power of $\ell$. Let $G^\pm$ be the orthogonal group of adjoint
isogeny type, 
and let $S^\pm$ be the
simple orthogonal group $\mathrm{P}\Omega_{2r}^\pm(q)$. If $p$ is
a prime with $p\neq \ell$, then $\Sigma(G^\pm)<|S^\pm|_{p'}$.
\end{propo}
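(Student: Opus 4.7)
The plan is to combine Proposition~\ref{np1} with a standard bound on the Sylow $p$-subgroup of $S^\pm$ obtained from Lemmas~\ref{Sylowp}, \ref{ss1} and~\ref{tt2}, following exactly the template used for the odd-dimensional orthogonal groups in Proposition~\ref{propoPOmega}. Since $G^\pm$ is a classical group of adjoint isogeny type, Proposition~\ref{np1} gives
\[
\Sigma(G^\pm) < |G^\pm|_{\ell'} \cdot |W|,
\]
where $W$ is the Weyl group of ${\mathbf G}$ (of type $D_r$ or $^2D_r$), of order $|W|=2^{r-1} r!$.

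Next, let $P$ be a Sylow $p$-subgroup of $S^\pm$. As $W$ is an extension of a subgroup of $(\mathbb{Z}/2)^r$ by $\Sym(r)$, Lemma~\ref{Sylowp} yields $|W|_p \leq 2^{r-1}(r!)_p \leq 2^{2r-2}$. Combining this with Lemmas~\ref{ss1} and~\ref{tt2} gives $|P| \leq (q+1)^r \cdot 2^{2r-2}$. The discussion preceding the statement of the proposition shows that $|S^\pm| \geq |G^\pm|/4$ in every case, so
\[
|S^\pm|_{p'} \;\geq\; \frac{|S^\pm|}{|P|} \;\geq\; \frac{|G^\pm|_{\ell'}\cdot q^{r(r-1)}}{2^{2r}(q+1)^r}.
\]
Consequently, the desired inequality $\Sigma(G^\pm)<|S^\pm|_{p'}$ is implied by the purely numerical estimate
\[
2^{3r-1}\, r!\, (q+1)^r \;<\; q^{r(r-1)},
\]
which a direct computation shows to hold for all pairs $(r,q)$ with $r\geq 4$ except for a short explicit list, namely (roughly) $r=4$ with $q\in\{2,3,4\}$, $r=5$ with $q=2$, and $r=6$ with $q=2$.

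The main obstacle will then be handling this short list of exceptional pairs, since for these the crude bound from Proposition~\ref{np1} is too weak---exactly the phenomenon flagged in the introductory paragraph of this section. For the cases with $q$ odd I would invoke the exact formula for $\Sigma(\mathrm{GO}^\pm_{2r}(q))$ of Vinroot~\cite[Theorem~4.1]{V} (together with the trivial inequality $\Sigma(G^\pm)\leq \Sigma(\mathrm{GO}^\pm_{2r}(q))$) and carry out the corresponding direct verification. The remaining cases involve only $q\in\{2,4\}$ in very small rank, so the character tables are readily accessible in \texttt{CHEVIE}~\cite{chevie} (or in the standard library of \texttt{magma}~\cite{magma}), and the inequality can be checked pair-by-pair, as has been done in the earlier sections. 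I anticipate no conceptual difficulty beyond this bookkeeping.
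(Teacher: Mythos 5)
Your proof follows the same template as the paper: Proposition~\ref{np1} gives $\Sigma(G^\pm)<|G^\pm|_{\ell'}\cdot|W|$, and Lemmas~\ref{Sylowp}, \ref{ss1}, \ref{tt2} give $|P|\leq(q+1)^r\cdot 2^{2r-2}$, reducing the proposition to the numerical inequality plus a finite list of exceptions handled by direct computation. Your version is in fact slightly sharper than the paper's, since you retain $|W|=2^{r-1}r!$ rather than bounding it by $(2r)^{r-1}$; note however that $r=7,q=2$ also fails your inequality $2^{3r-1}r!(q+1)^r<q^{r(r-1)}$ and so must be added to your exceptional list. The paper inserts one extra cheap filter before invoking explicit character data: for the pairs surviving the crude inequality, it checks $|S^\pm|_{p'}>|W|\cdot|G^\pm|_{\ell'}$ directly from the prime factorization of $|S^\pm|$, which kills all but $q=2,r\le 6$ and $q=3,r=4$; for those eight groups it then reads off the degree data from L\"ubeck's tables (rank $<9$). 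Your alternative of quoting Vinroot's exact formula for odd $q$ and \texttt{CHEVIE} data for even $q$ is an equally workable way to finish the bookkeeping.
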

\begin{proof}
Recall that
$|G^\pm|=q^{r(r-1)}(q^r\mp 1)\prod_{i=1}^{r-1}(q^{2i}-1)$ and that the Weyl group $W$ of ${\bf G}$ has order
$2^{r-1}r!$. In particular, as $r!< r^{r-1}$, we get $|W|< (2r)^{r-1}$.
 It follows from Proposition~\ref{np1} that
\begin{equation}\label{ortheven1}
\Sigma(G)\leq|G^{\pm}|_{\ell'}\cdot |W|< |G^{\pm}|_{\ell'}(2r)^{r-1}.
\end{equation}

Let $p$ be a prime with $p\neq \ell$ and let $P$ be a Sylow $p$-subgroup of $S^\pm$. Now, as $W$ is isomorphic to a semidirect product of an elementary abelian $2$-group of order $2^{r-1}$ by the symmetric group $\Sym(r)$, we get from Lemmas~\ref{ss1},~\ref{tt2} and~\ref{Sylowp} that $|P|\leq (q+1)^r|W|_p\leq (q+1)^r2^{r-1}2^{r-1}=(4(q+1))^r/4$. In particular, from above, we have
\begin{equation}\label{easier}
|S^\pm|_{p'}\geq
\frac{4|S^\pm|}{(4(q+1))^r}
\geq
\frac{4(|G^\pm|/(4,q^r\mp 1
))}{(4(q+1))^r}\geq \frac{|G^\pm|}{(4(q+1))^r}=\frac{q^{r(r-1)}|G^\pm|_{\ell'}}{(4(q+1))^r}.
\end{equation}

Moreover, an easy computer computation shows that the inequality
$$\frac{q^{r(r-1)}}{(4(q+1))^r}>(2r)^{r-1}$$
is satisfied for $q=2$ and $r\geq 9$, for $q=3$ and $r\geq 6$, for $q\in\{4,5\}$ and $r\geq 5$, and for $q>7$. In particular, for these values of $q$ and $r$, the proof follows from~\eqref{ortheven1} and~\eqref{easier}.

For each of the remaining cases, by considering the prime factorization of $|S^\pm|$, we have checked that $|S^{\pm}|_{p'}>|W|\cdot|G^\pm|_{\ell'}$ for every prime $p\neq \ell$, except for $q=2$ and $r\leq 6$, and for $q=3$ and $r=4$. So, in view of Proposition~\ref{np1} we only need to consider these $8$ cases. For this we appeal to our last resource, as $r\leq 8$, the character degrees (together with their multiplicities) are available in~\texttt{magma}~\cite{magma} format  in~\cite{Lubeck}. Hence, another tedious computer computation concludes the proof.
\end{proof}

\begin{propo}\label{symp}
Let $r\geq 2$ be an integer, let $\ell$ be a prime and let $q$ be a power of $\ell$. Let $G$ be the symplectic group $\mathrm{Sp}_{2r}(q)$ and let $S$ be the group $\mathrm{PSp}_{2r}(q)$. If $p$ is a prime with $p\neq \ell$, then either $\Sigma(G)<|S|_{p'}$ or
\begin{description}
\item[$(1)$]$r=2$, $\ell=2$ and $p=3$,
\item[$(2)$]$r=2$, $\ell=3$ and $p=2$.
\end{description}
\end{propo}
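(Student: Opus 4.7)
The plan is to follow the same strategy as in Proposition~\ref{ortheven}, combining Proposition~\ref{np1} with the Sylow bound from Lemmas~\ref{ss1}, \ref{tt2}, \ref{Sylowp}, and then clearing the remaining small cases by direct inspection. Since $G=\mathrm{Sp}_{2r}(q)$ has Weyl group $W$ of type $C_r$ (so $|W|=2^r r!$, and in particular $|W|<(2r)^{r}$ using $r!<r^r$), Proposition~\ref{np1} applied to the adjoint quotient ${\mathbf G}^F$ (which is $G$ itself when $\ell=2$ and has $G$ as a normal subgroup of index $2$ when $\ell$ is odd) gives an upper bound of the form
\begin{equation*}
\Sigma(G)\leq |G|_{\ell'}\cdot|W|<|G|_{\ell'}\cdot(2r)^{r}.
\end{equation*}
Indeed, when $\ell$ is odd one passes from $\mathrm{Sp}_{2r}(q)$ to $\mathrm{PCSp}_{2r}(q)$, but $\Sigma(G)\leq\Sigma(\mathrm{PCSp}_{2r}(q))$ can be arranged via the same reasoning used in Corollary~\ref{coroPSL}, or by observing that the adjoint group and $G$ have the same order of $\ell'$-part.

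Next I bound $|P|$ for a Sylow $p$-subgroup of $S=\mathrm{PSp}_{2r}(q)$ with $p\neq\ell$. Since $W$ is a semidirect product of an elementary abelian $2$-group of order $2^r$ with $\mathrm{Sym}(r)$, Lemma~\ref{Sylowp} gives $|W|_p\leq 2^r\cdot 2^{r-1}$, and then Lemmas~\ref{ss1}~and~\ref{tt2} yield
\begin{equation*}
|P|\leq (q+1)^r\cdot 2^{2r-1},\qquad\text{hence}\qquad |S|_{p'}\geq\frac{|G|/(2,q-1)}{(q+1)^r\cdot 2^{2r-1}}\geq\frac{q^{r^2}|G|_{\ell'}}{(4(q+1))^r}.
\end{equation*}
Combining the two displayed inequalities, the desired bound $\Sigma(G)<|S|_{p'}$ follows as soon as
\begin{equation*}
(2r)^{r}<\frac{q^{r^2}}{(4(q+1))^r},
\end{equation*}
which is a routine computation showing that the inequality holds for all but finitely many pairs $(r,q)$ (roughly: for $q\geq 7$ all $r\geq 2$; for smaller $q$, for all sufficiently large $r$).

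The main obstacle, as in the orthogonal case, is cleaning up the residual finite list of exceptions. For these small cases I would proceed in two stages. First, for each leftover $(r,q)$ with $r\leq r_0$ and $q\leq q_0$, I would use the prime factorisation of $|S|$ to check directly whether $|S|_{p'}>|W|\cdot|G|_{\ell'}$ for each prime $p\neq\ell$, reducing to a smaller list. Second, for the genuinely small cases (small enough that the character degrees are available in~\cite{Lubeck} in \texttt{magma} format, roughly $r\leq 8$), I would sum the character degrees with a computer and compare to $|S|_{p'}$ prime by prime. This final inspection is where the two exceptions in the statement will appear: $r=2,\ell=2,p=3$ (corresponding to $\mathrm{Sp}_4(2)\cong\mathrm{Sym}(6)$, where $|S|_{3'}$ is relatively small) and $r=2,\ell=3,p=2$ (corresponding to $\mathrm{PSp}_4(3)\cong\mathrm{PSU}_4(2)$, again with small $|S|_{2'}$). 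The delicate point is to verify that in all other small $(r,q,p)$ triples the strict inequality $\Sigma(G)<|S|_{p'}$ really holds; this is tedious but entirely mechanical once the character degrees are in hand.
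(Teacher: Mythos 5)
Your overall strategy mirrors the paper's one-line proof, which simply invokes the method of Proposition~\ref{ortheven}: bound $\Sigma$ via Proposition~\ref{np1}, bound $|P|$ via Lemmas~\ref{ss1}, \ref{tt2}, \ref{Sylowp}, compare, and mop up the small residual cases with L\"ubeck's data. The computations are sound.

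However, your handling of the adjoint-vs.-simply-connected mismatch is incorrect, and this is exactly the one point where the proposition differs nontrivially from Proposition~\ref{ortheven}. You write that the adjoint group ${\mathbf G}^F=\mathrm{PCSp}_{2r}(q)$ ``has $G$ as a normal subgroup of index $2$ when $\ell$ is odd.'' That is false: $\mathrm{Sp}_{2r}(q)$ and $\mathrm{PCSp}_{2r}(q)$ have the \emph{same} order, and the natural map $\mathrm{Sp}_{2r}(q)\to\mathrm{PCSp}_{2r}(q)$ has kernel of order $2$ and image $\mathrm{PSp}_{2r}(q)$, which sits in $\mathrm{PCSp}_{2r}(q)$ with index $2$. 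So $\mathrm{Sp}_{2r}(q)$ is neither a subgroup nor a quotient of $\mathrm{PCSp}_{2r}(q)$, and consequently the quotient-map reasoning from Corollary~\ref{coroPSL} (which gave $\Sigma(\mathrm{PGL}_n(q))\leq\Sigma(\mathrm{GL}_n(q))$) does not apply. Your fallback --- ``observing that the adjoint group and $G$ have the same order of $\ell'$-part'' --- also does not yield $\Sigma(\mathrm{Sp}_{2r}(q))\leq\Sigma(\mathrm{PCSp}_{2r}(q))$, since groups of equal order can have wildly different character degree sums. The inequality you need in order to transfer Proposition~\ref{np1} from the adjoint group to $G=\mathrm{Sp}_{2r}(q)$ requires a genuine argument (for instance comparing the Lusztig series structure of the two groups, or redoing the proof of Proposition~\ref{np1} for the simply connected group while controlling the effect of the disconnected centre), and as written you have not supplied one. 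Once that bridge is in place, the rest of your proof (the Sylow estimate $|P|\leq(q+1)^r2^{2r-1}$, the resulting inequality $(2r)^r<q^{r^2}/(4(q+1))^r$, the reduction to a finite list, and the two exceptions $\mathrm{Sp}_4(2)$, $\mathrm{PSp}_4(3)$) is in order.
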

\begin{proof}
The proof is exactly as the proof of Proposition~\ref{ortheven}. In particular, for small values of $r$, we heavily rely on the data in~\cite{Lubeck}.
\end{proof}

\begin{proof}[Proof of Theorem~$\ref{main1}$]
This follows immediately with a case-by-case analysis
using~\cite{Atl}, Lemma~\ref{is2},
Propositions~\ref{pp1},~\ref{execbound},~\ref{propoPOmega},~\ref{ortheven},~\ref{symp}
and Corollaries~\ref{coroPSL},~\ref{coroPSU}. Here we discuss in
detail the case $S=\mathrm{PSL}_2(q)$. If $\ell=2$, then $\mathrm{PSL}_2(q)=\mathrm{PGL}_2(q)$ and
part~(11) follows from Corollary~\ref{coroPSL}~(2). Assume that
$\ell>2$. Using the character table of $\PSL_2(q)$, we see that
$\Sigma(S)=(q^2+q+2)/2$ when $q\equiv 1\mod 4$, and $\Sigma(S)=q(q+1)/2$
when $q\equiv 3\mod 4$. Suppose that  Corollary~\ref{coroPSL}~(4)
is satisfied, that is, $p=2$ and $q-1=2^t$ for some $t\geq 2$. In
particular, $q\equiv 1\mod 4$. Now $|S|_{p'}=q(q+1)/2$ and hence
$\Sigma(S)=(q^2+q+2)/2>q(q+1)/2=|S|_{p'}$. So, part~(12) holds.
Finally, suppose that  Corollary~\ref{coroPSL}~(3) is satisfied,
that is, $q+1=2\cdot p^t$ for some $t\geq 1$. If $p>2$, then
$q\equiv 1\mod 4$ and
$|S|_{p'}=q(q-1)$, and hence $\Sigma(S)<|S|_{p'}$.  If $p=2$, then
$q\equiv 3\mod 4$ and
$|S|_{p'}=q(q-1)/2$, and hence $\Sigma(S)>|S|_{p'}$. So, part~(13)
holds.
\end{proof}

\end{document}